\title{Grid Representations and the Chromatic Number\footnote{This research was supported by the grant SVV-2012-265313 (Discrete Models and Algorithms).} \footnote{A preliminary version appeared in EuroCG 2012 28th European Workshop on Computational Geometry.}}
\author{Martin Balko\thanks{Department of Applie Mathematics, Faculty of Mathematics and Physics, Charles University in Prague. \href{mailto:martin.balko@seznam.cz}{martin.balko@seznam.cz}}}
\theoremstyle{plain}
\newtheorem{thm}{Theorem}
  \theoremstyle{definition}
  \newtheorem{defn}[thm]{Definition}
  \theoremstyle{plain}
  \newtheorem{cor}[thm]{Corollary}
 \theoremstyle{plain}
  \newtheorem{obs}[thm]{Observation}
  \theoremstyle{plain}
  \newtheorem{lem}[thm]{Lemma}
  \theoremstyle{plain}
  \newtheorem{prop}[thm]{Proposition}
  \theoremstyle{plain}
  \newtheorem{conj}[thm]{Conjecture}
\newenvironment{example}[1][Example]{\begin{trivlist}
\item[\hskip \labelsep {\bfseries #1}]}{\end{trivlist}}
\begin{document}
\maketitle

\begin{abstract}
A grid drawing of a graph maps vertices to grid points and edges to line segments that avoid grid points representing other vertices. We show that there is a number of grid points that some line segment of an arbitrary grid drawing must intersect. This number is closely connected to the chromatic number. Second, we study how many columns we need to draw a graph in the grid, introducing some new $\NP$-complete problems. Finally, we show that any planar graph has a planar grid drawing where every line segment contains exactly two grid points. This result proves conjectures asked by David Flores-Pe\~naloza and Francisco Javier Zaragoza Martinez.
\end{abstract}

\section{Introduction}

Let $G=(V,E)$ be a simple, undirected and finite graph. A $k$-{\sl coloring} of $G$ is a function $f\colon V \to C$ for some set $C$ of $k$ colors such that $f \left( u \right) \neq f \left( v \right)$ for every edge $uv\in E$. If such k-coloring of $G$ exists, then $G$ is $k$-{\sl  colorable}. The {\sl  chromatic number} $\chi \left( G \right)$ of $G$ is the least $k$ such that $G$ is $k$-colorable.

For integer $d \geq 2$, a column in the grid $\mathbb{Z}^{d}$ with {\sl  rank} $\left(x_{1},\dots, x_{d-1}\right) \in \mathbb{Z}^{d-1}$ is the set $\left\{ \left( x_{1},\dots, x_{d-1},x\right) \mid x \in \mathbb{Z} \right\}$.  Let $\overline{xy}$ denote the closed line segment joining two grid points $x,y \in \mathbb{Z}^{d}$. The line segment $\overline{xy}$ is {\sl  primitive} if $\overline{xy} \cap \mathbb{Z}^{d}=\left\{ x,y \right\}$ .

\begin{defn}
 A grid drawing $\phi \left( G \right)$ of $G$  in $\mathbb{Z}^{d}$ is an injective mapping $\phi \colon V \to \mathbb{Z}^{d}$ such that, for every edge $uv \in E$ and vertex $w \in V$, $\phi \left( w \right) \in \overline{\phi \left (u \right) \phi \left (v \right)}$ implies that $w = u$ or $w = v$.
\end{defn}

\section{Complexity of the Grid Drawings}

A graph $G$ is said to be {\sl  (grid) locatable in}  $\mathbb{Z}^{d}$ if there exists a grid drawing of $G$ in $\mathbb{Z}^{d}$ where every edge is represented by primitive line segment (such drawing is also called {\sl primitive}). Finding a primitive grid drawing of $G$ is called {\sl locating the graph} $G$. David Flores-Pe\~naloza and Francisco Javier Zaragoza Martinez showed~{\cite{pen09}} the following characterization:

\begin{thm}
[{\cite{pen09}}]
\label{fourcolorableIffLocatable}
A graph $G$ is locatable in $\mathbb{Z}^{2}$ if and only if $G$ is 4-colorable.
\end{thm}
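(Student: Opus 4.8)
The plan is to read primitivity as a divisibility condition and to match the four residue classes modulo $2$ with the four colors. For two distinct grid points $x=(x_1,x_2)$ and $y=(y_1,y_2)$ the segment $\overline{xy}$ contains exactly $\gcd(|x_1-y_1|,|x_2-y_2|)+1$ grid points, so $\overline{xy}$ is primitive if and only if $\gcd(|x_1-y_1|,|x_2-y_2|)=1$. The crucial observation is that if $x$ and $y$ lie in the same parity class, i.e.\ $x_1\equiv y_1$ and $x_2\equiv y_2\pmod 2$, then both differences are even and the gcd is at least $2$; hence $\overline{xy}$ is never primitive. Note also that a primitive segment contains no grid point besides its endpoints, so a primitive grid drawing is exactly an injective placement in which every edge spans a primitive segment (the vertex-avoidance condition then holds automatically).

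For the forward direction I would color each vertex $v$ by the parity class $(\phi(v)_1\bmod 2,\phi(v)_2\bmod 2)\in\{0,1\}^2$ of its image. There are four such classes, and by the observation above two adjacent vertices, whose segment must be primitive, cannot share a class. Hence this is a proper $4$-coloring and $G$ is $4$-colorable.

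For the converse I fix a proper $4$-coloring and realize it; it suffices to place the vertices so that any two of different colors span a primitive segment, since every edge joins two color classes while non-edges impose no condition. I reduce primitivity to a prime-by-prime condition: $\gcd(\Delta x,\Delta y)=1$ iff no prime $p$ divides both $\Delta x$ and $\Delta y$. First assign the four colors the four distinct parity classes in $(\mathbb{Z}/2)^2$; then any cross-class pair has an odd coordinate difference, its gcd is odd, and the prime $2$ is eliminated. Next choose all $x$-coordinates to be globally distinct integers in an interval of length $O(n)$ that respect the prescribed $x$-parities and, in addition, a prescribed residue modulo $3$ per color; this keeps every $|\Delta x|$ bounded by some $B=O(n)$, so the only primes that can divide a gcd are the odd primes $p\le B$. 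For each prime $5\le p\le B$ I prescribe, for the $y$-coordinate, four distinct residues modulo $p$, one per color (possible since $p>4$); then every cross-class pair has $\Delta y\not\equiv 0\pmod p$, so $p$ divides no gcd. For $p=3$ I instead prescribe four distinct residue pairs in $(\mathbb{Z}/3)^2$ (possible since $9>4$), using the already-fixed $x$-residue together with a $y$-residue modulo $3$; then every cross-class pair has $(\Delta x,\Delta y)\not\equiv(0,0)\pmod 3$. All the $y$-congruences combine by the CRT to a single residue modulo $m=6\prod_{5\le p\le B}p$ per color, and I place each vertex at its bounded $x$-coordinate and any $y$-coordinate meeting its color's residue modulo $m$ (these $y$ may be large, which is harmless).

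The verification is that for every cross-class pair and every prime $p$ at least one of $\Delta x,\Delta y$ is nonzero modulo $p$: for $p=2$ and $p=3$ because the colors occupy distinct residue pairs; for $5\le p\le B$ because the $y$-residues are distinct; and for $p>B$ because $0<|\Delta x|\le B$. Hence $\gcd(\Delta x,\Delta y)=1$ and the segment is primitive. I expect the main obstacle to be precisely this simultaneous control across all primes, and it is tightest at the smallest odd prime $p=3$, where three residues cannot separate four colors and one is forced to spend both coordinates. A secondary point needing care is the apparent circularity between the CRT modulus and the magnitude of the coordinates; it is broken by fixing bounded, distinct $x$-coordinates first, so that the set of relevant primes is finite and determined in advance, and only afterwards choosing the possibly large $y$-coordinates.
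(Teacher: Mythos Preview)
Your argument is correct. The forward direction is exactly the paper's Lemma~\ref{firstImp} specialized to $d=q=2$: color by residues modulo $2$ and observe that equal parity classes force $\gcd\ge 2$. For the converse the paper does not prove Theorem~\ref{fourcolorableIffLocatable} directly but obtains it from the general Theorem~\ref{complexityTheorem}, whose proof builds the column system $W_{s}$ via the sequences $S_{p,e}$ and Lemma~\ref{setWs}, placing one colour per column and then handling the finitely many ``large'' primes by assigning distinct residues in the last coordinate. Your construction follows the same prime-by-prime CRT strategy but is organised differently: instead of one column per colour you give every vertex its own bounded $x$-coordinate, which immediately caps the relevant primes by $B=O(n)$, and then load the $y$-coordinate with one residue per odd prime $\le B$ (using both coordinates only at $p\in\{2,3\}$, where four classes exceed the number of residues). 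This is a cleaner special-case argument than instantiating the full $W_{s}$ machinery; what the paper's route buys is uniformity in $d$ and $q$, while yours buys a shorter, more transparent proof for the planar lattice. One small remark: your claim that primitivity makes the vertex-avoidance condition automatic is fine since the map is injective and a primitive segment meets $\mathbb{Z}^2$ only at its endpoints, but you should also note that the $x$-coordinates being globally distinct is what guarantees injectivity of $\phi$.
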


Therefore not all graphs are locatable and every (two-dimensional) grid drawing of any $k$-colorable graph, where $k>4$, contains a line segment which intersects at least three grid points. This led us to a generalization of the concept of locatability. Let the number $gp \left( \phi \left( G \right)  \right)$ denote the maximal number of grid points any line segment of a grid drawing $\phi \left( G \right)$ intersects.

\begin{defn}
A graph $G$ is {\sl (grid)} $q$-locatable in $\mathbb{Z}^{d}$, for some integer $q \geq 2$, if there exists a grid drawing $\phi \left( G \right)$ in $\mathbb{Z}^{d}$ such that $gp \left( \phi \left( G \right)  \right) \leq q$.
\end{defn}

\begin{figure}[h!]
	\centering
	\includegraphics{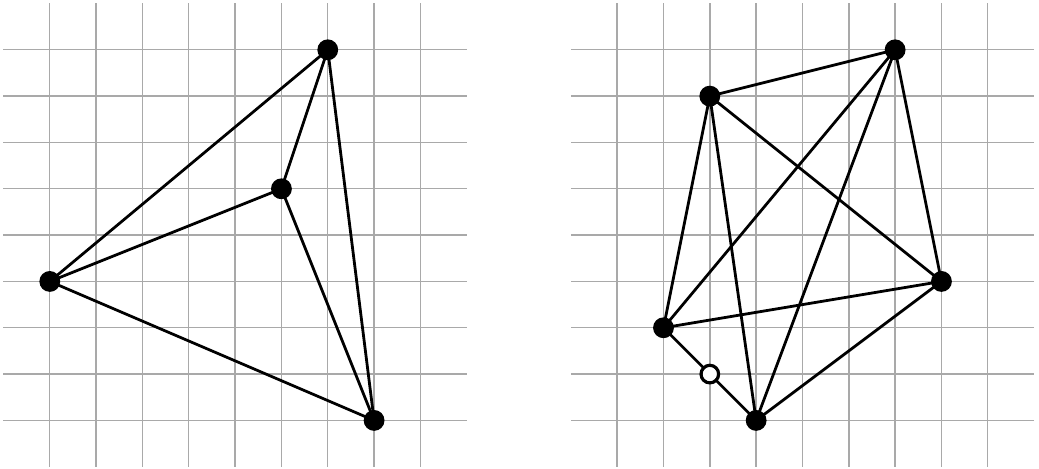}
	\caption{Some grid drawings}
	\label{fig:gridDrawing}
\end{figure}

The complexity of grid drawings of a graph $G$ is understood as the minimum of $gp \left( \phi \left( G \right)  \right)$ among all grid drawings $\phi \left( G \right)$. For example, the graph $K_{5}$ has chromatic number five, thus it is not (two-)locatable. However the grid drawing in Figure~\ref{fig:gridDrawing} shows that $K_{5}$ is three-locatable (the third grid point on line segment is denoted by an empty circle). The main result of this section is a stronger version of Theorem~\ref{fourcolorableIffLocatable}.

\begin{thm}
\label{complexityTheorem}
For integers $d,q \geq 2$, a graph $G$ is $q^{d}$-colorable if and only if $G$ is $q$-locatable in $\mathbb{Z}^{d} $.
\end{thm}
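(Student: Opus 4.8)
The statement generalises Theorem~\ref{fourcolorableIffLocatable} (which is the case $q=d=2$, where $q$-locatable means locatable and $q^d=4$), so the plan is to recover that proof's flavour while tracking the parameter $q$. The two implications have very different character, and I would treat them separately.

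\emph{The easy direction (locatable $\Rightarrow$ colourable).} Suppose $G$ is $q$-locatable in $\mathbb{Z}^{d}$ and fix a drawing $\phi$ with $gp(\phi(G))\le q$. I would colour each vertex by its coordinate residues, setting $f(v)=(\phi(v)_1 \bmod q,\dots,\phi(v)_d \bmod q)\in\{0,\dots,q-1\}^{d}$, which uses exactly $q^{d}$ colours. If $uv$ is an edge with $f(u)=f(v)$, then every coordinate of $\phi(u)-\phi(v)$ is divisible by $q$; as $\phi$ is injective this difference is nonzero, so $\gcd$ of the coordinate differences is a positive multiple of $q$, hence at least $q$. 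But the number of grid points on $\overline{\phi(u)\phi(v)}$ equals that $\gcd$ plus one, giving at least $q+1>q$ points and contradicting $gp(\phi(G))\le q$. Thus $f$ is a proper $q^{d}$-colouring. Notably this uses only the bound on edge segments, not the full grid-drawing axiom.

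\emph{The hard direction (colourable $\Rightarrow$ locatable), the construction.} Given a proper $q^{d}$-colouring, I would identify the colour set with $\{0,\dots,q-1\}^{d}$, so the colouring becomes $d$ digit-maps $f_1,\dots,f_d\colon V\to\{0,\dots,q-1\}$ that are jointly proper (adjacent vertices differ in some digit), and place $\phi(v)_i=f_i(v)+q\,s_i(v)$ for integer spreads $s_i(v)$ to be chosen, so that the residues encode the colour exactly as above. The role of the spreads is to force, for every edge, some coordinate difference to lie in $\{1,\dots,q-1\}$ in absolute value; this makes the $\gcd$ of the coordinate differences at most $q-1$ and hence puts at most $q$ grid points on the segment. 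To arrange this, for an edge $uv$ let $i(u,v)$ be the first digit in which $f$ differs. If $s_i$ is chosen to depend only on the digits $f_1,\dots,f_{i-1}$ (equivalently, to be constant on the components of the graph formed by the edges whose first differing digit is exactly $i$), then $s_{i(u,v)}(u)=s_{i(u,v)}(v)$, so the $i(u,v)$-th coordinate difference is $f_{i(u,v)}(u)-f_{i(u,v)}(v)\in\{\pm1,\dots,\pm(q-1)\}$ and the $\gcd$ bound holds for that edge. This settles the segment-length requirement for all edges simultaneously.

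\emph{The remaining difficulties and the main obstacle.} Two things are still missing, and this is where the real work lies. First, injectivity: with the spreads above, $\phi$ depends only on the colour, so same-colour (hence non-adjacent) vertices collide. I would repair this by perturbing the spreads within each colour class without disturbing the agreements $s_{i(u,v)}(u)=s_{i(u,v)}(v)$ used on edges, which is possible precisely because colour classes are independent sets, and I would argue that a generic choice of the per-component spread values separates the classes. Second, the full grid-drawing axiom: on an edge with $\gcd>1$ the segment carries interior grid points, and no vertex may land on one; since only finitely many such coincidences are possible, I expect to clear them by putting the spread values in general position, exploiting the $d\ge 2$ available directions. I anticipate the hardest part to be reconciling the small-$\gcd$ requirement, which pins coordinate differences and pulls the endpoints of an edge together, with injectivity and the no-vertex-on-segment condition, which push vertices apart; verifying that the freedom in the spreads suffices for every graph, and not merely for a convenient instance, is the crux, and I would likely have to choose the identification of colours with $\{0,\dots,q-1\}^{d}$ carefully so that every colour class is separated in at least one coordinate.
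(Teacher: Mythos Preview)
Your easy direction is correct and coincides with the paper's Lemma~\ref{firstImp}.

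The hard direction has a real gap. Your edge requirement $s_{i(u,v)}(u)=s_{i(u,v)}(v)$ forces each $s_i$ to be constant on the components of the auxiliary graph $H_i$ whose edges are those of $G$ with first differing colour-digit equal to $i$, and any perturbation you make must respect this. Now take $G=K_{n,\dots,n}$, the complete $q^{d}$-partite graph with $n\ge 2$. If $u,v$ share a colour $A$, then for every $i$ there is a colour $B$ with $i(A,B)=i$, and in $K_{n,\dots,n}$ both $u$ and $v$ are $H_i$-adjacent to every vertex of colour $B$; hence $u,v$ lie in the same $H_i$-component for all $i$, so $s_i(u)=s_i(v)$ for all $i$ and $\phi(u)=\phi(v)$. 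No perturbation consistent with the edge agreements can separate them, and relabelling the colour tuples cannot help, by symmetry. Since every $q^{d}$-colourable graph embeds in some $K_{n,\dots,n}$, this is exactly the instance you must handle, and your scheme collapses there.

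The paper takes a different route that avoids pinning any single coordinate difference to be small. It builds, via the Chinese Remainder Theorem, a family $W_s$ (with $s=q^{d}$) of $s$ infinite columns in $\mathbb{Z}^{d}$ such that any two points in distinct columns have $\gcd$ of coordinate differences at most $q-1$ (Lemma~\ref{setWs}). Each colour class is placed in its own column, and because every column still contains infinitely many admissible heights, there is unbounded freedom to separate same-coloured vertices and then to clear the no-vertex-on-segment condition greedily. The decoupling---column choice controls the $\gcd$, within-column freedom controls injectivity---is exactly what your construction lacks.
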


We split the proof of this theorem into two parts. First, we show the easier implication and then, after some auxiliary constructions, we give a proof of the reverse implication.

\begin{lem}
\label{firstImp}
For integers $d,q \geq 2$, if the graph $G$ is $q$-locatable in $\mathbb{Z}^{d}$, then it is $q^{d}$-colorable.
\end{lem}
\begin{proof}
A trivial but useful observation (see {\cite{apos76}} for example) is that the line segment $\overline{ab}$ between the grid points $a,b \in \mathbb{Z}^{d}$ intersects exactly the grid points of the form \[ \left( a_{1}+ i \frac{b_{1}-a_{1}}{\alpha},\dots, a_{d}+ i \frac{b_{d}-a_{d}}{\alpha}\right)\] where $0 \leq i \leq \alpha$ and $\alpha = \gcd \left( |a_{1}-b_{1}|,\ldots,|a_{d}-b_{d}| \right)$. Let $\phi \left( G \right)$ be a grid drawing of the graph $G= \left( V,E \right)$ in $\mathbb{Z}^{d}$ having $gp \left( \phi \left( G \right) \right) \leq q$. Consider the function $f \colon \mathbb{Z}^{d} \to \mathbb{Z}^{d}_{q}$ denoted as 
\[f \left( x_{1},\dots,x_{d} \right) = \left( x_{1} \left( \bmod q \right),\dots,x_{d} \left( \bmod q \right) \right)\] We use $f$ as coloring of the grid with $q^{d}$ colors and we show that it is also a proper vertex coloring of $G$. Assume to the contrary that $f \left( \phi \left( u \right) \right) = f \left( \phi \left( v \right) \right)$ for some $uv \in E$. Then $u_{1} \equiv v_{1}, \dots, u_{d} \equiv v_{d} \left( \bmod q \right)$ which implies \[\gcd \left( |u_{1} - v_{1}|,\dots,|u_{d}-v_{d}| \right) \geq q\] According to our observation, there are at least $q+1$ grid points lying on the line segment $\overline{\phi \left( u \right) \phi \left( v \right)}$. This contradicts the fact that $G$ is $q$-locatable via the drawing $\phi \left( G \right)$. 
\end{proof}

Thus it remains to show the implication in the opposite direction. The main idea is to find a subset of $\mathbb{Z}^{d}$ which we can use for a convenient grid drawing of every $q^d$-colorable graph. 

Assume that the dimension $d$ is fixed and let $p$ be a prime number. We define $V_{p,1}$ as the sequence $\left\{ x_{i} \right\} _{i=0}^{p^{d}-1}$ such that each $x_{i}$ is from the set $\mathbb{Z}^{d}_{p}$ and no two terms are equal. This definition is correct as we can always find $p$ distinct residues modulo $p$ and, naturally, there are $p^{d}$ distinct $d$-tuples of these residues. Now we define $V_{p,e}$ for $e \geq 2$ inductively. Assume as induction hypothesis that we have already set $V_{p,e-1}$. Now we place $V_{p,e}$ as a chain of $p^{d}$ copies of $V_{p,e-1}$. Then we change the terms on the positions \[i+p^{d\left( e -1 \right)},\ldots,i+ \left( p^{d} -1 \right)p^{d\left( e-1 \right)}\] for every $i \in \left\{ 0,1,\ldots,p^{d\left( e-1 \right)}-1 \right\}$ in such way that the new terms are numbers from $\mathbb{Z}_{p^{e}}$ congruent to their predecessors modulo $p^{e}$  and no two terms in $V_{p,e}$ are equal. For each element of $Z^{d}_{p^{e-1}}$ there are $p^{d}$ congruent elements from $\mathbb{Z}_{p^{e}}^{d}$ modulo $p^{e-1}$ and one of them is on the $i$-th position of $V_{p,e}$. Thus the definition of $V_{p,e}$ is, again, correct.

Continual repeating of the copies of $V_{p,e}$ gives us the infinite sequence $S_{p,e}$. We denote the $i$-th term of $S_{p,e}$ as $S_{p,e}[i]$ and the distance of two terms $S_{p,e}[i]$ and $S_{p,e}[j]$ is given by $|i-j|$. The following lemma shows an important feature of these sequences.
\begin{lem}
\label{sequenceDist}
Let $p$ be prime number and $e$ positive integer. Then two terms of $S_{p,e}$ are equal if and only if $p^{de}$ divides their distance.
\end{lem}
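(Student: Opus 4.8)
The plan is to reduce the lemma to a single fact — that the $p^{de}$ terms making up one period $V_{p,e}$ are pairwise distinct — and then to obtain that fact by induction on $e$. The point is that $S_{p,e}$ is by construction periodic with period $p^{de}$, being the infinite repetition of the length-$p^{de}$ block $V_{p,e}$. Hence if $p^{de}$ divides $|i-j|$, then $i\equiv j\pmod{p^{de}}$ and $S_{p,e}[i]=V_{p,e}[i \bmod p^{de}]=V_{p,e}[j \bmod p^{de}]=S_{p,e}[j]$, which disposes of the ``if'' direction immediately. For the converse it suffices to show that $V_{p,e}[i']\ne V_{p,e}[j']$ whenever $0\le i'\ne j'<p^{de}$: if we had $S_{p,e}[i]=S_{p,e}[j]$ with $p^{de}\nmid|i-j|$, then setting $i'=i \bmod p^{de}$ and $j'=j \bmod p^{de}$ would give $i'\ne j'$ yet $V_{p,e}[i']=V_{p,e}[j']$, a contradiction. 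So the whole content of the lemma is the distinctness of the terms of $V_{p,e}$.

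I would prove that distinctness by induction on $e$. The base case $e=1$ is clear, since $V_{p,1}$ lists $p^{d}$ distinct elements of $\mathbb{Z}_{p}^{d}$. For the inductive step I would first record the structural fact read off from the definition: for $0\le m<p^{de}$, writing $m=i+k\,p^{d(e-1)}$ with $0\le i<p^{d(e-1)}$ and $0\le k<p^{d}$, the term $V_{p,e}[m]$ is the lift to $\mathbb{Z}_{p^{e}}^{d}$ of $V_{p,e-1}[i]\in\mathbb{Z}_{p^{e-1}}^{d}$ determined by the copy index $k$; and as $k$ ranges over $\{0,\dots,p^{d}-1\}$ we obtain exactly the $p^{d}$ distinct lifts of $V_{p,e-1}[i]$, one per copy. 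Then, taking $0\le m\ne n<p^{de}$ with $m=i+k\,p^{d(e-1)}$ and $n=j+l\,p^{d(e-1)}$, I would split into two cases. If $i\ne j$, the induction hypothesis gives $V_{p,e-1}[i]\ne V_{p,e-1}[j]$ (both indices lie in one full period of length $p^{d(e-1)}$), so $V_{p,e}[m]$ and $V_{p,e}[n]$ already differ modulo $p^{e-1}$ and hence differ. If $i=j$ but $k\ne l$, then $V_{p,e}[m]$ and $V_{p,e}[n]$ are two different lifts of the same base element, so again they differ. Either way $V_{p,e}[m]\ne V_{p,e}[n]$, completing the induction.

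The main obstacle — indeed the only place where care is genuinely required — is the bookkeeping in the inductive step: correctly decomposing a position $m$ into its ``within-copy'' index $i=m \bmod p^{d(e-1)}$ and its ``copy'' index $k$, matching this decomposition to the lifting prescription of the construction, and verifying that distinct copies really do receive distinct lifts (which rests on each coordinate having exactly $p$ residues modulo $p^{e}$ over a fixed residue modulo $p^{e-1}$, giving $p^{d}$ lifts in all). Once this indexing is pinned down, the two cases dispatch cleanly and the lemma follows from periodicity together with the distinctness just proved.
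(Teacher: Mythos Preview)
Your proposal is correct and follows the same approach as the paper: both arguments reduce the lemma to the periodicity of $S_{p,e}$ together with the pairwise distinctness of the $p^{de}$ terms of $V_{p,e}$. The only difference is that you supply an inductive proof of this distinctness, whereas the paper treats it as part of the definition of $V_{p,e}$ (the construction explicitly stipulates ``no two terms in $V_{p,e}$ are equal'' and argues that such a choice is possible), so your induction is really a verification that the construction is well-defined rather than additional content needed for the lemma itself.
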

\begin{proof}
Suppose that our terms are on positions $i$ and $j$. The case $i=j$ is apparent, thus we can assume $i \neq j$. From the definition two distinct terms equal if and only if both are in different copies of $V_{p,e}$, but on the same position in $V_{p,e}$. The length of $V_{p,e}$ is exactly $p^{de}$, so the distance between $S_{p,e}[i]$ and $S_{p,e}[j]$ is a multiple of $p^{de}$.
\end{proof}

Given a number $s$, we set $f \left( p \right)$ as $\min \left\{ e \in \mathbb{N} \mid p^{de}\geq s\right\}$ for every prime number $p<s$. Now, for every $i$, where $0 \leq i \leq s-1$, we choose a distinct column of $\mathbb{Z}^{d}$ such that for every prime number $p<s$ the rank of this column is congruent to the first $d-1$ elements of the $d$-tuple $S_{p,e}[i]$ modulo $p^{f \left( p \right)}$. We label the chosen columns as $W_{0,s},W_{1,s},\ldots,W_{s-1,s}$. In every column $W_{i,s}$ we keep only the points with their last coordinate congruent to the last element of $S_{p,e}[i]$ modulo $p^{f \left( p \right)}$, again for every $p<s$. Finally we set $W_{s}=\bigcup_{i=0}^{s-1} W_{i,s}$.

Let us mention the last technical remark. If there is a prime $p \geq s$ such that ranks of two or more columns from $W_{s}$ are congruent modulo $p$, then we assign distinct residues modulo $p$ to these columns. Subsequently, we keep only the points with their last coordinate congruent to the assigned residue modulo $p$ in each one of these columns. This method is correct, because the number of possible residues is at least $s$, thus every column can get an unique residue. According to the Chinese Remainder Theorem, every column of $W_{s}$ still contains infinitely many points.

\begin{example}

Assume we want to build $W_{9}$ in two-dimensional case. For $s=9$, we have to define the sequences $S_{2,2}$, $S_{3,1}$, $S_{5,1}$ and $S_{7,1}$, as $2^{4},3^{2},5^{2},7^{2} \geq 9$. No other sequences are required, because $9 \leq p$ for every other prime number $p$.

\begin{figure}[h!]
\centering
$S_{2,2} = \left( 0,0 \right), \left( 0,1 \right), \left( 1,0 \right), \left( 1,1 \right), \left( 0,2 \right), \left( 0,3 \right), \left( 1,2 \right), \left( 1,3 \right), \left( 2,0 \right), \ldots$

$S_{3,1} = \left( 0,0 \right), \left( 0,1 \right), \left( 0,2 \right), \left( 1,0 \right), \left( 1,1 \right), \left( 1,2 \right), \left( 2,0 \right), \left( 2,1 \right), \left( 2,2 \right), \ldots$

$S_{5,1} = \left( 0,0 \right), \left( 0,1 \right), \left( 0,2 \right), \left( 0,3 \right), \left( 0,4 \right), \left( 1,0 \right), \left( 1,1 \right), \left( 1,2 \right), \left( 1,3 \right), \ldots$

$S_{7,1} = \left( 0,0 \right), \left( 0,1 \right), \left( 0,2 \right), \left( 0,3 \right), \left( 0,4 \right), \left( 0,5 \right), \left( 0,6 \right), \left( 1,0 \right), \left( 1,1 \right), \ldots$
\end{figure}

Then we can get the set $W_{9}$ as a union of the following columns:

\begin{figure}[h!]
\centering
$W_{0,9} = \left\{ \left( 0,x \right) \in \mathbb{Z}^{2} \mid x \equiv 0 \left( \bmod 4 \right), 0 \left( \bmod 3 \right), 0 \left( \bmod 5 \right), 0 \left( \bmod 7 \right) \right\}$

$W_{1,9} = \left\{ \left( 420,x \right) \in \mathbb{Z}^{2} \mid x \equiv 1 \left( \bmod 4 \right), 1 \left( \bmod 3 \right), 1 \left( \bmod 5 \right), 1 \left( \bmod 7 \right) \right\}$

$W_{2,9} = \left\{ \left( 105,x \right) \in \mathbb{Z}^{2} \mid x \equiv 0 \left( \bmod 4 \right), 2 \left( \bmod 3 \right), 2 \left( \bmod 5 \right), 2 \left( \bmod 7 \right) \right\}$

$W_{3,9} = \left\{ \left( 385,x \right) \in \mathbb{Z}^{2} \mid x \equiv 1 \left( \bmod 4 \right), 0 \left( \bmod 3 \right), 3 \left( \bmod 5 \right), 3 \left( \bmod 7 \right) \right\}$

$W_{4,9} = \left\{ \left( 280,x \right) \in \mathbb{Z}^{2} \mid x \equiv 2 \left( \bmod 4 \right), 1 \left( \bmod 3 \right), 4 \left( \bmod 5 \right), 4 \left( \bmod 7 \right) \right\}$

$W_{5,9} = \left\{ \left( 196,x \right) \in \mathbb{Z}^{2} \mid x \equiv 3 \left( \bmod 4 \right), 2 \left( \bmod 3 \right), 0 \left( \bmod 5 \right), 5 \left( \bmod 7 \right) \right\}$

$W_{6,9} = \left\{ \left( 161,x \right) \in \mathbb{Z}^{2} \mid x \equiv 2 \left( \bmod 4 \right), 0 \left( \bmod 3 \right), 1 \left( \bmod 5 \right), 6 \left( \bmod 7 \right) \right\}$

$W_{7,9} = \left\{ \left( 281,x \right) \in \mathbb{Z}^{2} \mid x \equiv 3 \left( \bmod 4 \right), 1 \left( \bmod 3 \right), 2 \left( \bmod 5 \right), 0 \left( \bmod 7 \right) \right\}$

$W_{8,9} = \left\{ \left( 386,x \right) \in \mathbb{Z}^{2} \mid x \equiv 0 \left( \bmod 4 \right), 2 \left( \bmod 3 \right), 3 \left( \bmod 5 \right), 1 \left( \bmod 7 \right) \right\}$
\end{figure}

In the last step we ensure possible occurrences of prime numbers $p \geq s$ in decompositions of differences of ranks. For example, prime number $23$ divides the difference of ranks $0$ and $161$. But, if we can keep only the points $\left( 0,x \right) \in W_{0,9}$ and the points $\left( 0,y \right) \in W_{6,9}$ such that $x$ and $y$ are not congruent modulo $23$, then $23$ does not divide $\gcd \left( |a_{1}-b_{1}|,|a_{2}-b_{2}| \right)$ for any $a \in W_{0,9}$ and $b \in W_{6,9}$.
\label{exampleWs}
\end{example}

The construction of the set $W_{s}$ is not easy to describe, but it has nice properties that allow us to prove the crucial lemma in the proof of Theorem~\ref{complexityTheorem}.
\begin{lem}
\label{setWs}
Let $s$ be $d$-th power of integer $q \geq 2$. Let $a= \left( a_{1},\ldots,a_{d}\right)$, $b= \left( b_{1},\ldots,b_{d}\right)$ be grid points located in distinct columns of the set $W_{s}$. Then \[\gcd \left( \left| a_{1} - b_{1} \right|, \ldots, \left| a_{d} - b_{d}\right| \right) \leq \sqrt[d]{s} - 1\]
\end{lem}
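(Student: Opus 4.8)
The plan is to argue by contradiction. Set $g := \gcd\!\left(|a_1-b_1|,\dots,|a_d-b_d|\right)$ and suppose $g \ge q = \sqrt[d]{s}$. Writing $a \in W_{i,s}$ and $b \in W_{j,s}$ with $i \ne j$, I will show that this forces $g^{d} \mid (i-j)$. Since $0 < |i-j| \le s-1 < s = q^{d}$, this gives $g^{d} < q^{d}$ and hence $g < q$, a contradiction. So the whole argument reduces to extracting, one prime power at a time, divisibility information about $i-j$ from the fact that $a \equiv b$ modulo every prime power dividing $g$, and then recombining these coprime contributions into $g^{d}\mid(i-j)$.

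The key technical step, which I would establish first, is a strengthening of Lemma~\ref{sequenceDist}: for a prime $p$, a positive integer $e$, and any $k$ with $1 \le k \le e$,
\[ S_{p,e}[i] \equiv S_{p,e}[j] \pmod{p^{k}} \quad\Longleftrightarrow\quad p^{dk} \mid (i-j). \]
Lemma~\ref{sequenceDist} is exactly the case $k=e$. I would prove this by induction on $e$ directly from the recursive definition of $V_{p,e}$. The base case $e=1$ holds because $V_{p,1}$ lists the $p^{d}$ residues of $\mathbb{Z}_p^{d}$ without repetition. For the step, write a position as $n = r + m\,p^{d(e-1)}$ with $0 \le r < p^{d(e-1)}$ and $0 \le m < p^{d}$; by construction $V_{p,e}[n] \equiv V_{p,e-1}[r] \pmod{p^{e-1}}$, and the $p^{d}$ entries sharing a given $r$ are precisely the $p^{d}$ distinct lifts of $V_{p,e-1}[r]$ to $\mathbb{Z}_{p^{e}}^{d}$. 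Hence for $k \le e-1$ the congruence modulo $p^{k}$ reduces to the statement for $V_{p,e-1}$ and $r$ via the induction hypothesis, while for $k=e$ it reduces to $V_{p,e}$ being a bijection onto $\mathbb{Z}_{p^{e}}^{d}$. Passing to the periodic extension $S_{p,e}$ is immediate since $p^{dk}\mid p^{de}$.

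With this in hand I would split the primes into two groups. For a prime $p \ge s$ and distinct columns $i \ne j$: if the ranks of the two columns are incongruent modulo $p$ then some coordinate difference among the first $d-1$ is not divisible by $p$, whereas if the ranks are congruent modulo $p$ then the last-coordinate residues were deliberately made distinct, so $a_d \not\equiv b_d \pmod{p}$; either way $p \nmid g$. Thus $g$ has only prime factors $p < s$. For such a $p$, set $e=f(p)$ and let $k_p \ge 1$ be the exponent of $p$ in $g$. Since every coordinate of $a$ (resp.\ $b$) is congruent to the corresponding entry of $S_{p,e}[i]$ (resp.\ $S_{p,e}[j]$) modulo $p^{e}$, divisibility of each $|a_\ell-b_\ell|$ by $p^{\min(k_p,e)}$ yields $S_{p,e}[i] \equiv S_{p,e}[j] \pmod{p^{\min(k_p,e)}}$. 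This first rules out $k_p > e$: that would give $p^{de}\mid(i-j)$ with $0<|i-j|\le s-1 < s \le p^{de}$, forcing $i=j$. Hence $k_p \le e$, and the strengthened lemma gives $p^{d k_p}\mid (i-j)$.

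Finally, since the prime powers $p^{d k_p}$ over the distinct primes $p \mid g$ are pairwise coprime and each divides $i-j$, their product $\bigl(\prod_{p} p^{k_p}\bigr)^{d} = g^{d}$ divides $i-j$, which completes the contradiction as above and yields $g \le q-1 = \sqrt[d]{s}-1$. I expect the main obstacle to be the inductive congruence lemma: one must track precisely how reducing $V_{p,e}$ modulo $p^{k}$ collapses it onto repeated copies of a bijective block, so that distance in the sequence translates cleanly into divisibility by $p^{dk}$. Everything after that is routine prime-by-prime bookkeeping together with the Chinese Remainder structure already built into $W_s$.
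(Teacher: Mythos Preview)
Your proof is correct and follows essentially the same line as the paper. The paper phrases the key step as ``$p^{e}\mid\alpha$ if and only if $S_{p,e}[x]=S_{p,e}[y]$'' for each $e\le f(p)$, using Lemma~\ref{sequenceDist} applied to $S_{p,e}$ directly; this is equivalent to your strengthened congruence lemma, since by the recursive construction $S_{p,f(p)}[i]\bmod p^{k}=S_{p,k}[i]$ for $k\le f(p)$. Both arguments then combine the divisibilities $p^{dk_p}\mid(i-j)$ over the (small) prime divisors of $g$ to get $g^{d}\le|i-j|\le s-1$, and finish with $g\le q-1$. Your contradiction frame is harmless but unnecessary: the chain of divisibilities already gives $g^{d}\le s-1$ directly.
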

\begin{proof}
Let $\alpha$ denote the greatest common divisor in the statement. Assume that the grid point $a$ is in the column $W_{x,s}$ and the grid point $b$ in the column $W_{y,s}$, $0 \leq x,y \leq s-1$ and $x \neq y$. The last remark in the construction of $W_{s}$ guarantee that no prime number larger than $s-1$ divides $\alpha$. Also, for every $e \in \mathbb{N}$ and prime number $p$, the power $p^{e}$ divides $\alpha$ if and only if $S_{p,e}[x] = S_{p,e}[y]$. Because $p^{e} \mid \alpha$ implies that each coordinate of $a$ is congruent to each coordinate of $b$ modulo $p^{e}$ and these coordinates are congruent to the $d$-tuples $S_{p,e}[x]$ and $S_{p,e}[y]$ modulo $p^{e}$. Thus $p^{e}$ does not divide  $\alpha$ for $e \geq f \left( p \right) $. Otherwise $S_{p,f \left( p \right)}[x] = S_{p,f \left( p \right)}[y]$ and, according to Lemma~\ref{sequenceDist}, the distance $\left| x-y \right|$ between them is at least $p^{d f \left( p \right)}$, which is at least $s$. But this contradicts the inequality $0 \leq x,y \leq s-1$. 

So we can assume that $\alpha = \prod_{i=1}^{k} {p_{i}^{e_{i}}}$ where $p_{i}$ are prime numbers and $1 \leq e_{i} \leq f \left( p_{i} \right) -1$. Then \[\alpha^{d} = \prod_{i=1}^{k} {p_{i}^{de_{i}}} \leq s - 1\] holds. Because the expression of $\alpha$ implies that $S_{p_{i},e_{i}}[x]=S_{p_{i},e_{i}}[y]$ and, again, we get $p_{i}^{de_{i}} \mid \left| x-y \right|$ for every $i \in \left\{ 1,2,\ldots,k \right\}$. Thus $\alpha^{d} \leq \left| x-y \right| \leq s - 1$.

We know that $s$ is $d$-th power of some integer $q \geq 2$ and we just showed that $\alpha^{d}$ is smaller $d$-th power than $s$. Thus $\alpha^{d} \leq {\left( q - 1 \right)}^{d}$. But this gives us the required inequality, as $ \left( q - 1 \right) \leq \sqrt[d]{q^{d}} -1$ holds trivially.
\end{proof}

Now we can finally prove Theorem~\ref{complexityTheorem}.
\begin{proof}[Proof of Theorem~\ref{complexityTheorem}]
The first implication is proven in Lemma~\ref{firstImp}, so assume that $G$ is a $q^{d}$-colorable graph, $q \geq 2$. We need to find a grid drawing of $G$ such that at most $q$ grid points lie on any of its line segments. It suffices to show how to find such drawing for complete $q^{d}$-partite graph $K_{n,\ldots,n}$ and arbitrary $n \in \mathbb{N}$, because every $q^{d}$-colorable graph on $n$ vertices is its subgraph. We consider the set $W_{s}$ for $s=q^{d}$ and we keep only the first $n$ vertices of its first two columns. Then for every $i$, $2\leq i \leq s-1$, we keep the first $n$ points in the column $W_{i,s}$ such that all points in previous columns are visible from any of these points (with respect to other columns). These points in $W_{i,s}$ exist, because, unlike $W_{i,s}$, the previous columns are finite sets. 

Afterwards we obtain the set $W_{s} \left( n \right) \subset W_{s}$ such that $K_{n,\ldots,n}$ is isomorphic to the visibility graph $\upsilon \left(  W_{s} \left( n \right) \right)$ and, according to Lemma~\ref{setWs}, no line segment contains more then $q$ grid points.  Therefore we get suitable grid drawing of $G$ and the second implication is proven.
\end{proof}

Note that the proof is constructive and we can find an appropriate grid drawing in time $O \left( \left| V \right| \right)$ for a given coloring of $G$.

\begin{cor}
\label{locatabilityDim}
A graph is $2^{d}$-colorable if and only if is locatable in $\mathbb{Z}^{d}$, for $d \geq 2$.
\end{cor}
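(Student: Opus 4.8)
The plan is to derive this corollary directly from Theorem~\ref{complexityTheorem} by specializing to the case $q=2$. The key observation I would make first is that being locatable in $\mathbb{Z}^d$ is exactly the same condition as being $2$-locatable in $\mathbb{Z}^d$. Indeed, any line segment joining two distinct grid points contains at least its two endpoints, so $gp\left(\phi\left(G\right)\right)\geq 2$ for every grid drawing; hence the bound $gp\left(\phi\left(G\right)\right)\leq 2$ forces $gp\left(\phi\left(G\right)\right)=2$, meaning that every edge is represented by a segment whose only grid points are its two endpoints. By the definition of a primitive segment, this is precisely the requirement that the drawing be primitive, that is, that $G$ be locatable.

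With this equivalence in hand, I would simply invoke Theorem~\ref{complexityTheorem} with $q=2$, which states that a graph $G$ is $2^{d}$-colorable if and only if it is $2$-locatable in $\mathbb{Z}^{d}$. Chaining this with the equivalence above yields that $G$ is $2^{d}$-colorable if and only if it is locatable in $\mathbb{Z}^{d}$, which is exactly the assertion of the corollary.

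There is essentially no obstacle in this argument; the only point requiring a moment's care is verifying that the inequality $gp\left(\phi\left(G\right)\right)\leq 2$ really coincides with the primitivity condition in the original definition of locatability, and this follows at once from the trivial lower bound $gp\left(\phi\left(G\right)\right)\geq 2$. As a consistency check, setting $d=2$ recovers Theorem~\ref{fourcolorableIffLocatable}, the characterization of Flores-Pe\~naloza and Zaragoza Martinez, since a graph is then locatable in the plane exactly when it is $4$-colorable, in agreement with $2^{2}=4$.
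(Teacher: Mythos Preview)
Your proposal is correct and follows exactly the intended route: the paper states this corollary without proof, as an immediate specialization of Theorem~\ref{complexityTheorem} to $q=2$, and your observation that $2$-locatability coincides with locatability (since every segment contains at least its two endpoints) is precisely the trivial link needed.
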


\begin{cor}
For given $d,q \geq 2$, it is \NP-complete to decide whether or not a graph $G$ is $q$-locatable in $\mathbb{Z}^{d}$.
\end{cor}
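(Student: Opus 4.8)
The plan is to derive this directly from Theorem~\ref{complexityTheorem}, which converts the geometric question of $q$-locatability into the purely combinatorial question of $q^{d}$-colorability. Once that equivalence is in hand, both the upper bound (membership in $\NP$) and the lower bound (hardness) follow from standard facts about graph coloring, so the corollary should require essentially no new work beyond invoking the theorem.

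First I would establish membership in $\NP$. By Theorem~\ref{complexityTheorem}, $G$ is $q$-locatable in $\mathbb{Z}^{d}$ if and only if $G$ is $q^{d}$-colorable, so a proper $q^{d}$-coloring of $G$ serves as a succinct certificate: it has size linear in $\left| V \right|$ and can be verified against the edge set in polynomial time. One could instead try to exhibit the grid drawing itself as a witness, and indeed for fixed $d$ and $q$ the construction in the proof of Theorem~\ref{complexityTheorem} produces coordinates of polynomial magnitude; but the coloring certificate is cleaner and avoids any bookkeeping about coordinate sizes, so I would use it.

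For $\NP$-hardness I would again invoke Theorem~\ref{complexityTheorem}: as a language, the decision problem ``is $G$ $q$-locatable in $\mathbb{Z}^{d}$?'' has exactly the same set of yes-instances as the problem ``is $G$ $q^{d}$-colorable?''. Since $d,q \geq 2$ we have $q^{d} \geq 4 \geq 3$, and it is a classical result that deciding $k$-colorability is $\NP$-complete for every fixed integer $k \geq 3$ (for $k > 3$ one reduces from $3$-colorability by attaching a clique of size $k-3$ joined to all vertices). Hence the identity reduction, mapping each graph $G$ to itself, is a polynomial-time reduction from $q^{d}$-colorability, and the problem is $\NP$-hard.

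Combining the two parts yields $\NP$-completeness. I expect no genuine obstacle: the whole content has already been supplied by Theorem~\ref{complexityTheorem}, and the only two points that deserve care are the choice of a polynomial-size certificate for the $\NP$ upper bound and the observation that the fixed value $q^{d}$ always lands in the range $k \geq 3$ where colorability is known to be hard.
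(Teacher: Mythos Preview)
Your proposal is correct and follows essentially the same route as the paper: both invoke Theorem~\ref{complexityTheorem} to identify $q$-locatability with $q^{d}$-colorability and then appeal to the classical $\NP$-completeness of $k$-colorability for $k \geq 3$. The only cosmetic difference is that the paper dismisses $\NP$ membership with a single ``clearly'' while you spell out the coloring certificate explicitly; your version is the more careful of the two.
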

\begin{proof}
Clearly, the problem belongs to \NP. Theorem~\ref{complexityTheorem} shows a reduction of the colorability problem, which asks ``Does $G$ admit a proper vertex coloring with $q^{d}$ colors?'', to our problem. We can also ensure that the reduction is polynomial.
\end{proof}

\section{Compactness}

Our main concern in this section is how to draw a graph on the bounded number of columns in a grid. There is no loss of generality in assuming that the grid is two-dimensional. Because if we can find a grid drawing $\phi \left( G \right)$ in $\mathbb{Z}^{d}$, $d > 2$, on $l$ columns, then we can transfer this drawing on $l$ columns in $\mathbb{Z}^{2}$. We just take each column of the original grid drawing and transfer its points to an arbitrary free column in the plane. Then we might have to shift some columns higher so that no point representing vertex lies on nonadjacent line segment. This is always possible as the number of vertices in $G$ is finite. By the same trick, we can also assume that there is no unused column between two columns in our drawing. If $l$ is the minimal number of columns on which $G$ can be drawn, then we say that this grid drawing of $G$ is {\sl compact}.

It is easy to see that if there is a grid drawing on $l \geq 2$ columns for a graph $G$, then $G$ is $l$-locatable (in the plane), because the differences of column ranks from such grid drawing are always lower then $l$ and we can move the adjacent points of the same column such that the line segment between them is primitive. The implication in the reverse direction does not hold as the graph $K_{7}$ is, according to Theorem~\ref{complexityTheorem}, three-locatable, but it cannot be drawn on three columns, because the last vertex with any other two vertices induces $C_{3}$. Thus compactness is not the locatability in disguise. Suppose  $G$ is $l$-locatable, then we know it is $l^{2}$-colorable. In such case $G$ is embeddable on $l^{2}$ columns, because the vertices of each color can use one column. Thus we have:
\begin{cor}
\label{columnsChromatic}
A graph $G$ is embeddable on at most $\chi \left( G \right)$ columns.
\end{cor}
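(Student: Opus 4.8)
The plan is to turn an optimal proper coloring directly into a column assignment. Let $k = \chi(G)$ and fix a proper $k$-coloring $f \colon V \to \{1,\dots,k\}$. I would place all vertices of color $c$ into a single column of rank $c$; that is, set $\phi(v) = (f(v), h(v))$ in $\mathbb{Z}^{2}$, where the heights $h(v) \in \mathbb{Z}$ are still to be chosen. Since $f$ is proper, the two endpoints of every edge receive different colors and hence lie in two distinct columns; in particular no edge of $G$ is ever drawn inside a single column, so the only remaining question is whether the heights can be chosen so that $\phi$ is a valid grid drawing.

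For the height assignment I would process the columns in the order $1, 2, \dots, k$ and, inside each column, place its vertices one at a time, choosing for each vertex an integer height that avoids a finite set of forbidden values. The key geometric observations are the following. An edge already drawn between two used columns $j, j' < i$ never reaches the vertical line $x = i$, so it cannot pass through any vertex placed later; and every edge incident to the column currently being filled meets that column only at its own endpoint, so it cannot pass through another vertex of the same column. Consequently, when assigning a height to a new vertex $v$ in column $i$, the sole incidence to be avoided is that one of the edges joining $v$ to its already-placed neighbours passes through some vertex sitting in an intermediate column. For each pair consisting of such a neighbour and such an intermediate vertex there is exactly one bad height of $v$, namely the value at $x = i$ of the line through those two fixed grid points; as there are only finitely many such pairs, together with the finitely many heights already used in column $i$, and the column contains infinitely many grid points, a good height always exists. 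This is the same finiteness argument already used in the proof of Theorem~\ref{complexityTheorem} to keep all previously placed points visible.

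Carrying this out produces an injective map $\phi$ whose edges are represented by segments containing no third vertex, that is, a genuine grid drawing, and it uses exactly the $k = \chi(G)$ columns of ranks $1, \dots, k$, giving the claimed bound. The degenerate case $\chi(G) \le 1$ is immediate, since $G$ is then edgeless and fits into a single column. The main obstacle is precisely the incidence-avoidance step, but because every constraint removes only finitely many candidate heights from an infinite column it presents no real difficulty; the heart of the statement is simply that an independent set, i.e. a single color class, can always be stacked into one column.
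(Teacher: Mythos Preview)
Your proposal is correct and follows exactly the paper's approach: the corollary is justified in the paper by the single remark ``the vertices of each color can use one column,'' and you have simply spelled out in detail the height-selection (shifting) argument that the paper leaves implicit here and invokes elsewhere. The only minor comment is that your case analysis could be shortened by appealing directly to Observation~\ref{columnsCharacterization}, since every color class is an independent set and hence trivially a linear forest.
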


However none of the shown bounds is tight, because there is, for example, a locatable graph with a compact grid drawing on three columns. See Figure~\ref{fig:locatable3Columns}. 
\begin{figure}[h!]
	\centering
	\includegraphics{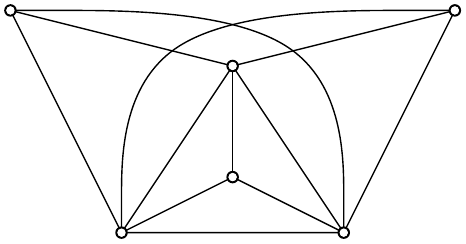}
	\caption{A locatable graph with a compact grid drawing on three columns}
	\label{fig:locatable3Columns}
\end{figure}

The next simple observation characterizes which graphs are embeddable on $l$ columns in the terms of the graph theory.
\begin{obs}
\label{columnsCharacterization}
A graph $G = \left( V,E \right)$ is embeddable on $l$ columns if and only if $V$ can be partitioned into $V_{1},V_{2},\ldots,V_{l}$ such that each induced subgraph $G\left[V_{i}\right]$ is isomorphic to a disjoint union of paths.
\end{obs}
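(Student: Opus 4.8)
The plan is to prove the two directions separately, both resting on a single structural fact about vertical segments. Since all vertices placed in one column share the same rank, their images are collinear, and the vertical segment $\overline{\phi(u)\phi(v)}$ between two grid points of a common column contains \emph{every} grid point of that column whose last coordinate lies strictly between those of $\phi(u)$ and $\phi(v)$. Consequently an edge $uv$ with $\phi(u),\phi(v)$ in the same column is admissible in a grid drawing if and only if no third vertex is placed between them in the vertical order of that column.

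For the forward implication, suppose $G$ has a grid drawing on $l$ columns and let $V_i$ be the set of vertices mapped to the $i$-th column. Order $V_i$ as $w_1,\dots,w_m$ by increasing last coordinate. By the fact above, if $w_jw_k$ with $j<k$ is an edge then no $w_t$ with $j<t<k$ may exist, i.e.\ $k=j+1$; thus every edge of $G[V_i]$ joins two consecutive vertices of this order. A graph all of whose edges join consecutive elements of a fixed linear order is a subgraph of the path $w_1w_2\cdots w_m$, and a subgraph of a path is a disjoint union of paths. Hence each $G[V_i]$ is a disjoint union of paths and $V_1,\dots,V_l$ is the required partition.

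For the reverse implication I would construct a drawing from a partition $V_1,\dots,V_l$ in which every $G[V_i]$ is a disjoint union of paths. For each $i$ I concatenate the paths of $G[V_i]$, listing the vertices of each path in its natural order; this yields a linear order on $V_i$ in which any two adjacent vertices of $G[V_i]$ are consecutive. I then put the $i$-th column at rank $i$ and place the vertices of $V_i$ along it at increasing heights respecting this order. Every intra-column edge now joins two consecutive vertices of its column, so by the structural fact its vertical segment meets no other vertex image. The only edges left to check are the inter-column ones, which are non-vertical, so for them it suffices to avoid the grid points of the remaining vertices.

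This last point is the step I expect to be the real obstacle. For a fixed pair of ranks, the height at which an inter-column segment crosses an intermediate column is a single linear function of the two endpoint heights, so each potential incidence $\phi(w)\in\overline{\phi(u)\phi(v)}$ with $w\notin\{u,v\}$ is one linear condition on the heights. Since $G$ is finite there are only finitely many such conditions, so I can choose the heights---or, equivalently, shift whole columns vertically by large generic integers, exactly as in the transfer argument opening this section---so as to avoid all of them simultaneously while keeping the coordinates integral and the prescribed vertical orders intact. This produces a valid grid drawing of $G$ on $l$ columns and completes the equivalence.
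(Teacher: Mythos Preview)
Your proof is correct and follows essentially the same approach as the paper's: partition $V$ by column and observe that intra-column edges can only join consecutively placed vertices (forcing each $G[V_i]$ to be a linear forest), and conversely lay each linear forest along its own column and then shift columns vertically to avoid the finitely many forbidden incidences from inter-column edges. The paper states all of this in two sentences and leaves the details implicit; you have simply spelled them out.
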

\begin{proof}
In the grid drawing of $G$ on $l$ columns, the vertices represented by points of a single column define a set $V_{i}$. On the other hand, if we have a partition $V_{1},V_{2},\ldots,V_{l}$ of $V$, then each $G\left[V_{i}\right]$ can be drawn on a single column and we can always shift the vertices in such way that the visibility of representing points is guaranteed.
\end{proof}

Thus embedding of a graph on few columns is equivalent with a special variant of defective coloring. That is, an improper vertex coloring in which every color class induces a cycle-free subgraph of maximum degree at most two  (that is a linear forest). We call these color classes {\sl path-colors} for short. Also note that the case $l=1$ is not difficult, because a graph that is embeddable on a single column is a disjoint union of paths and this can be determined in linear time.

If we restrict our attention to only primitive grid drawings, then the situation changes rapidly. According to Theorem~\ref{complexityTheorem}, only four-colorable graphs have primitive grid drawings in the plane. Also we know, according to Corollary~\ref{locatabilityDim}, that we have to proceed to grid drawings in higher dimensions if we want to obtain primitive grid drawings of graphs with larger chromatic number. The minimum dimension of grid on which a graph $G$ can be located is $\lceil \log _{2} \left( \chi \left( G \right) \right) \rceil$ and this is the dimension we factor in for $G$. Despite the fact that the situation with primitive grid drawings is quite different, Theorem~\ref{complexityTheorem} gives us the same upper bound on the minimal number of columns. 

\begin{cor}
\label{locatingChromatic}
A graph $G$ can be located on at most $\chi \left( G \right)$ columns in $\mathbb{Z}^{d}$.
\end{cor}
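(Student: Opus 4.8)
The plan is to combine the dimension-lifting idea behind Corollary~\ref{locatabilityDim} with the column-counting argument used for Corollary~\ref{columnsChromatic}. We want to locate $G$ (that is, produce a \emph{primitive} grid drawing) in its natural dimension $d = \lceil \log_2(\chi(G)) \rceil$ while using no more than $\chi(G)$ columns. The starting observation is that $G$ is $\chi(G)$-colorable, and since $d = \lceil \log_2(\chi(G)) \rceil$ we have $2^d \geq \chi(G)$, so $G$ is in particular $2^d$-colorable. By Corollary~\ref{locatabilityDim} this means $G$ is locatable in $\mathbb{Z}^d$, so a primitive drawing exists; the remaining task is purely about bounding the number of columns that such a drawing needs.

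The key step is to mimic the proof of Corollary~\ref{columnsChromatic}: fix an optimal proper coloring of $G$ with $\chi(G)$ color classes, and assign one column of $\mathbb{Z}^d$ to each color class. Within a single column, all vertices share the same rank $(x_1,\dots,x_{d-1}) \in \mathbb{Z}^{d-1}$ and differ only in the last coordinate, so every edge drawn inside one column would join two vertices of the same color---but a proper coloring has no such monochromatic edges. Hence every edge of $G$ runs between two \emph{distinct} columns. This is exactly the setting where we can make each edge primitive: by the visibility argument already invoked in the proof of Theorem~\ref{complexityTheorem} and Observation~\ref{columnsCharacterization}, we place the vertices of each color class one by one in its assigned column, shifting the last coordinate far enough that the segment to any previously placed vertex avoids all other grid points representing vertices, and is primitive (this is possible because finitely many forbidden positions leave infinitely many admissible ones in an infinite column).

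The main obstacle to anticipate is reconciling the two requirements simultaneously: the drawing must be primitive (no interior grid point \emph{at all} on any edge, whether or not it represents a vertex) \emph{and} live on only $\chi(G)$ columns, whereas Corollary~\ref{columnsChromatic} only guaranteed the weaker visibility-drawing condition on $\chi(G)$ columns. Primitivity is a stronger constraint, so I would need to argue that the ranks assigned to the $\chi(G)$ columns can be chosen coprime enough---for instance by giving the columns ranks whose pairwise coordinate differences have $\gcd$ equal to $1$, or by a direct appeal to the construction of $W_s$ with $s = 2^d$ restricted to $\chi(G)$ of its columns---so that Lemma~\ref{setWs} forces $\gcd(|a_1-b_1|,\dots,|a_d-b_d|) \leq \sqrt[d]{2^d} - 1 = 1$ on every cross-column segment, which is precisely primitivity. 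Once the columns are chosen this way, the last-coordinate shifting only needs to ensure injectivity and that no third vertex lands on a segment, and the $\chi(G)$-column bound is met.

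Thus the proof reduces to two clean pieces: the coloring gives the column count, and an appropriate choice of column ranks (via $W_{2^d}$ and Lemma~\ref{setWs}) upgrades visibility to primitivity. I expect the delicate part to be verifying that one can simultaneously satisfy the primitivity requirement from Lemma~\ref{setWs} and keep exactly $\chi(G) \leq 2^d$ columns, since the natural construction produces $2^d$ candidate columns and we must select a suitable subset of size $\chi(G)$ without losing the $\gcd$ bound.
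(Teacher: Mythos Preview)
Your approach is correct and coincides with the paper's: the corollary is stated there without proof, as a direct consequence of (the constructive direction of) Theorem~\ref{complexityTheorem} via the set $W_{2^d}$, exactly as you outline. The concern you flag at the end is not actually delicate---Lemma~\ref{setWs} bounds the $\gcd$ for \emph{every} pair of distinct columns of $W_s$, so passing to any subset of $\chi(G)\le 2^d$ columns preserves the $\gcd\le 1$ conclusion automatically and no further selection argument is needed.
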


However this bound is not tight even in the current case.  For example, the graph $K_{5}$ cannot be located in $\mathbb{Z}^{2}$ as its chromatic number is five, but it can be located on three columns in $\mathbb{Z}^{3}$. Note that this number of columns is minimum, because three vertices on a single column induce a 3-cycle. Thus compact primitive grid drawing of $K_{5}$ is on three columns in $\mathbb{Z}^{3}$.

In the previous section we assume that the set of columns in a compact grid drawing does not contain any holes. That is, there are no unused columns between two columns of this grid drawing. But now we cannot modify a primitive grid drawing by the same trick as before, because shifted line segments could intersect more grid points and the drawing would not be primitive then. Thus it could happen that some primitive grid drawings on minimal number of columns are necessarily vast and sparse. Luckily, the following theorem shows that there are primitive grid drawings with minimal number of columns which take up little space. It also gives us a characterization of locating similar to Observation~\ref{columnsCharacterization}.

\begin{thm}
\label{locatingCharacterization}
For a graph $G=\left( V,E \right)$, integers $d \geq 2$ and $l$, $2^{d-1} < l \leq 2^{d}$, the following statements are equivalent:
\begin{enumerate}
\item $G$ can be located on $l$ columns in $\mathbb{Z}^{d}$,
\item $V$ can be partitioned into $V_{1},V_{2},\ldots,V_{l}$ such that $2^{d}-l$ induced subgraphs $G\left[V_{i}\right]$ induce a disjoint union of paths and the rest induces independent sets.
\end{enumerate}
\end{thm}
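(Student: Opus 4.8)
The plan is to prove both implications with two common tools: the proper coloring $f(x_{1},\dots,x_{d})=(x_{1}\bmod 2,\dots,x_{d}\bmod 2)$ of Lemma~\ref{firstImp} (the case $q=2$), and the fact recorded in its proof that two points $(r,t),(r,t')$ sharing a rank $r\in\mathbb{Z}^{d-1}$ span a primitive segment only when $|t-t'|=1$. As in Observation~\ref{columnsCharacterization}, this forces every single column of a primitive drawing to induce a disjoint union of paths. Grouping columns by their \emph{rank parity} $\bar\epsilon\in\{0,1\}^{d-1}$, the vertex set $U_{\bar\epsilon}$ of the columns of rank parity $\bar\epsilon$ is split by $f$, according to the parity of the last coordinate, into two independent sets; so each $U_{\bar\epsilon}$ is always $2$-colourable, and whenever $\bar\epsilon$ carries a single column $U_{\bar\epsilon}$ is itself a disjoint union of paths.

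For $(1)\Rightarrow(2)$ I would start from a drawing on $l$ columns and sort the $2^{d-1}$ rank parities into those carrying $0$, exactly $1$, or at least $2$ columns, with counts $t_{0},t_{1},r$; thus $t_{0}+t_{1}+r=2^{d-1}$ and $l=t_{1}+\sum m_{j}$ with each $m_{j}\ge 2$. Put $s:=\max\{0,\,2t_{1}+2r-l\}$; one checks $s\le t_{1}$ (from $t_{1}+2r\le l$). Keep $s$ of the one-column parities as whole parts, each a disjoint union of paths, and split the residual set $V'$ into independent sets drawn from the classes of $f$. Since $V'$ meets only $(t_{1}-s)+r$ rank parities, $\chi(G[V'])\le 2\big((t_{1}-s)+r\big)\le l-s$, while $V'$ contains at least the $l-s$ remaining (nonempty) columns, so $l-s\le|V'|$; hence $V'$ can be split into \emph{exactly} $l-s$ independent sets, giving $l$ parts in all. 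At most $s$ of them carry an edge, and the decisive inequality $s\le 2^{d}-l$ is, after substitution, equivalent to $t_{0}\ge 0$, so it always holds. As an independent set is a disjoint union of one-vertex paths, relabelling then yields exactly $2^{d}-l$ path parts and $2l-2^{d}$ independent sets.

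For $(2)\Rightarrow(1)$ I would realise the partition geometrically. Assign the $2^{d}-l=2^{d-1}-k$ path parts to $2^{d-1}-k$ distinct rank parities (one column each) and the $2l-2^{d}=2k$ independent sets in pairs to the remaining $k$ rank parities (two columns each); this uses all $2^{d-1}$ rank parities and exactly $l$ columns, which is where the hypothesis $2^{d-1}<l\le 2^{d}$ enters. Inside a path column the paths are written as runs of consecutive grid points, so their edges are primitive automatically, and after spreading the columns far apart and choosing generic heights no vertex lies on a foreign segment. It then remains to fix the ranks, and the last-coordinate residues of the independent-set columns, so that Lemma~\ref{setWs} applies with $s=2^{d}$ and forces every cross-column segment to have $\gcd=\sqrt[d]{2^{d}}-1=1$.

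The step I expect to be the real obstacle is precisely this placement, because a path column must contain points of \emph{both} parities of the last coordinate (to host a vertical edge), so its last coordinate runs through consecutive values and can control \emph{no} prime, unlike the columns of $W_{2^{d}}$. I would therefore transfer all control for path columns onto the ranks: distinct path parts, and the independent-set parities, receive distinct rank parities, so the prime $2$ never divides a gcd involving a path column; and the rank of each path column is chosen so that its difference to every other column's rank is a primitive vector, whence the whole gcd of any cross-segment from a path column is $1$ for \emph{every} value of the last coordinate. For $d\ge 3$ such primitive offsets are abundant (only the prime $2$ can collide modulo $p^{d-1}$, and that is settled by the rank parities, while primes exceeding all coordinate differences are automatic); in the plane there are at most $2^{d}-l\le 1$ path columns, placed at unit distance from the remaining columns, so the same conclusion holds directly. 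The independent-set columns, having a fixed last-coordinate parity and no internal edge, are positioned exactly as in the construction of $W_{2^{d}}$: opposite parities separate the two columns of a pair (controlling the prime $2$), and distinct last-coordinate residues modulo the finitely many primes below $2^{d}$, together with distinct ranks, control the rest. Checking that all these congruences are simultaneously satisfiable, so that each realised edge is genuinely primitive, is the technical heart of the argument.
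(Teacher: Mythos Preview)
Your argument for $(1)\Rightarrow(2)$ is correct and in fact cleaner than the paper's. The paper builds a ``congruence graph'' on the column ranks (edges between ranks congruent modulo~$2$) and proves the bound on the number of path parts by an induction on~$l$, repeatedly adding columns and recolouring. Your pigeonhole count avoids this entirely: sorting rank parities by multiplicity and setting $s=\max\{0,\,2t_{1}+2r-l\}$ gives directly that $s\le t_{1}$ single-column parities can be kept as path parts while the remaining vertices, spread over $(t_{1}-s)+r$ rank parities, admit a proper $2((t_{1}-s)+r)\le l-s$ colouring by the map $f$ of Lemma~\ref{firstImp}. The verification $s\le 2^{d}-l\iff t_{0}\ge 0$ is neat. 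This direction stands.

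The direction $(2)\Rightarrow(1)$, however, has a genuine gap. You correctly identify that a path column must carry consecutive heights and therefore cannot be confined to a single residue class in its last coordinate, so all primitivity control for a path column has to sit in the rank. You then assert that one can choose ranks so that every rank difference involving a path column is a primitive vector in~$\mathbb{Z}^{d-1}$, and that the independent-set columns can be placed ``exactly as in the construction of~$W_{2^{d}}$''. But these two requirements interact: once you fix the $W_{2^{d}}$-style ranks for the independent columns, you must still find ranks for the path columns whose differences to \emph{all} of them are primitive, and you have not shown this system of congruences is consistent. Moreover, even among independent-set columns your sketch is incomplete: two columns in different pairs can have rank difference $(3,0,\ldots,0)$, and then an edge between them is primitive only if the height difference avoids multiples of~$3$ --- a condition you mention only in passing. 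You yourself flag this step as ``the technical heart of the argument'', but it is not carried out.

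The paper resolves this with a fully explicit placement that you were close to but did not write down. All ranks are taken from the set $\{0,1,2,3\}\times\{0,1\}^{d-2}$; for fixed $(r_{2},\ldots,r_{d-1})$ the four ranks $g_{0},g_{1},g_{2},g_{3}$ form a group, and a short case analysis assigns to each group either two path parts (at $g_{0},g_{1}$), or one path part and two independent parts (at $g_{1}$; $g_{0}$ with even heights; $g_{2}$ with odd heights), or four independent parts (heights at $g_{0},g_{3}$ additionally controlled modulo~$3$). Between different groups some coordinate $r_{j}$ with $j\ge 2$ differs by $\pm1$, so every such rank difference is automatically primitive; within a group one checks the six pairs directly. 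This explicit scheme is exactly the missing ingredient, and it is considerably simpler than threading your proposal through~$W_{2^{d}}$.
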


Note that the dimension of a grid is minimum for such choice of $l$, according to Corollary~\ref{locatabilityDim}. Also an independent set is a disjoint union of paths as well, thus the statement says that there are at most $2^{d}-l$ induced subgraphs $G\left[V_{i}\right]$ that induce a disjoint union of paths.

\begin{proof}
Suppose that $G$ is located on $l$ columns in $\mathbb{Z}^{d}$. We construct a {\sl congruence graph} $C$ on the set of column ranks of such primitive grid drawing. Every vertex of this graph corresponds to an unique column rank and two vertices are adjacent if the corresponding ranks are congruent modulo two. The graph $C$ is a disjoint union of complete graphs, because congruence is equivalence relation. All points in the columns with ranks which lie in the same connected component of $C$ can be colored with two colors and each such color  induces an independent set. Because if we color the points with the odd last coordinate white and the points with the even last coordinate black, then no two monochromatic points can share an edge. Since such ranks are congruent modulo two, then the line segment joining two adjacent monochromatic points would not be primitive. But this would be a contradiction, since the whole grid drawing is primitive. Thus we can use two colors in each clique in $C$ which contains at least two vertices.

Now we show by induction on $l$ that $l$ colors is sufficient and that there are at most $2^{d}-l$ colors that induce a disjoint union of paths. Consider the case when $l=2^{d-1}+1$. Then the congruence graph cannot contain more than $2^{d-1}-1$ isolated vertices, because the maximal number of possible values of ranks modulo two is $2^{d-1}<l$. In such case we color the points of column, whose rank corresponds to an isolated vertex in $C$, with a single color. These colors induce disjoint unions of paths. Then we color the points in all columns with ranks congruent modulo two with only two colors (as we showed before). Then the condition holds, because $2^{d}-l=2^{d}-\left(2^{d-1}+1 \right)=2^{d-1}-1=l-2$. 

 Let us assume that this initial graph contains all isolated vertices of the final congruence graph $C$. Now suppose that our $C$ contains $l$ vertices and we know from the induction hypothesis that the condition holds for congruence graphs on $l-1$ vertices. We get the graph $C$ by joining one vertex $u$ to such congruence graph. Due to the choice of initial graph, we know that $u$ is not isolated in $C$.  If we join $u$ to some clique with at least two vertices, then we color the points of a corresponding column with the two colors of this clique. One color for points in even height, the other one for points in odd length. If $2^{d}-l$ drops bellow the number of colors which induce a disjoint union of paths, then we choose an isolated vertex whose column is monochromatic and color its points with two colors. One color is the original one, the other is new for $u$. If we join the new vertex $u$ to an isolated vertex $v$, then there are two possibilities. If the points of the column with rank $v$ are colored with a single color, then we color points in the columns with ranks $u$ and $v$ using two colors. One is new for $u$, the other is original. If points of the column with rank $v$ are bi-chromatic, then we color points in both columns with these two colors and alternatively correct the case of low $2^{d}-l$ as before.

Now we prove the reverse implication. Let $V_{1},V_{2},\ldots,V_{l}$ be the partition of $V$ in the second statement. Consider the set \[\left\{ \left(r_{1},r_{2},\ldots,r_{d-1}\right)\in\mathbb{Z}^{d-1}\mid r_{1} \in \mathbb{Z}_{4}, r_{i} \in \mathbb{Z}_{2}\right\}\] The last $d-2$ coordinates $r_{2},r_{3},\ldots,r_{d-1}$ determine the set \[\left\{ \left(r_{1},r_{2},\ldots,r_{d-1}\right)\in\mathbb{Z}^{d-1}\mid r_{1} \in \mathbb{Z}_{4}\right\}\] We mark it as $G_{r_{2},\ldots,r_{d-1}}$ and its elements as $g_{i,r_{2},\ldots,r_{d-1}} = \left( i,r_{2},\ldots,r_{d-1}\right)$, for $i=0,1,2,3$. For $d=2$, there is only one such $G=\left\{ 0,1,2,3 \right\}$.  Now we show a simple algorithm how to locate $G$ on columns with ranks from this set. We repeat the following steps until there is no set of vertices left in our partition. 

\begin{enumerate}
\item Take $G_{r_{2},\ldots,r_{d-1}}$ that has not been chosen yet. 
\item If there are two sets $V_{i}$, $V_{j}$ such that $G\left[V_{i}\right]$, $G\left[V_{j}\right]$  are linear forests and there is no set which induces an independent set, then map the vertices from $V_{i}$ to points of column with rank $g_{0,r_{2},\ldots,r_{d-1}}$ and the vertices from $V_{j}$ to points of column with rank $g_{1,r_{2},\ldots,r_{d-1}}$.
\item If there is $V_{i}$ such that $G\left[V_{i}\right]$ induce a linear forest and two sets $V_{j}$, $V_{k}$ which induce independent sets, then map the vertices of $V_{i}$ on the column with rank $g_{1,r_{2},\ldots,r_{d-1}}$. Also, map the vertices of $V_{j}$ to points of column with rank $g_{0,r_{2},\ldots,r_{d-1}}$ that have even $d$-th coordinate and map the vertices of $V_{k}$ to points of column with rank $g_{2,r_{2},\ldots,r_{d-1}}$ that have odd $d$-th coordinate.
\item If there is no such $V_{i}$, then take four (or two, if there are not that many) sets from the partition. Let these sets be $V_{i}$, $V_{j}$, $V_{k}$ and $V_{m}$. Every one of them induces an independent set. Map $V_{i}$ to points of column with rank $g_{0,r_{2},\ldots,r_{d-1}}$ that have even $d$-th coordinate divisible by three and $V_{j}$ to points of column with rank $g_{1,r_{2},\ldots,r_{d-1}}$ that have even $d$-th coordinate too. Then, map $V_{k}$ to points of column with rank $g_{2,r_{2},\ldots,r_{d-1}}$ that have odd last coordinate and $V_{m}$ to points of column with rank $g_{3,r_{2},\ldots,r_{d-1}}$ that have odd last coordinate which is not divisible by three.
\item Remove chosen sets of vertices from the partition.
\end{enumerate}

Note that the total number of sets in the partition which induce independent set is even, because this number equals $l-\left(2^{d}-l\right)=2l - 2^{d}$. Thus if there is at least one such set in any step of the algorithm, then there is also another one, because we remove these sets by two or four.

The maximum number of steps is $2^{d-2}$, because it is also the number of sets $G_{r_{2},\ldots,r_{d-1}}$. We show that this number is sufficient. First, notice that for each $V_{i}$, that induces a linear forest, we lower $l$ by one (if we start with empty partition and $l=2^{d}$). Thus we can pair such $V_{i}$ with unique empty set of vertices and we obtain $2^{d}$ sets of vertices such that some of them induce a disjoint union of paths, some an independent set and some are empty. Each step of the algorithm takes four of these sets and locates their vertices. Thus we can locate all these $2^{d}=4\cdot2^{d-2}$ sets within $2^{d-2}$ steps. 

It is not difficult to see that the obtained grid drawing is primitive as the only possible occurrence of non-primitive line segment is between columns from the same set $G_{r_{2},\ldots,r_{d-1}}$. But we mapped the vertices such that no line can intersect more than two grid points.
\end{proof}

The proof of the previous theorem shows how to relocate a primitive grid drawing of $G$ on minimal number of columns, such that the new grid drawing is still primitive and it also requires small part of the grid (the first $d-1$ coordinates are constant). We also obtained relation between compact and primitive compact grid drawings.

\begin{cor}
\label{compactLocatingRelation}
Every graph with a grid drawing on $l$ columns has a primitive grid drawing on $k$ columns in $\mathbb{Z}^{d}$ where $l \leq k \leq 2l-2^{d-1}$ and $d$ is an integer such that $2^{d-1} +1 \leq l  \leq 2^{d}$.
\end{cor}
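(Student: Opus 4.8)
The plan is to deduce the statement from the two structural characterizations already established: Observation~\ref{columnsCharacterization} for ordinary grid drawings and Theorem~\ref{locatingCharacterization} for primitive ones. First I would take the given (compact) grid drawing of $G$ on $l$ columns and apply Observation~\ref{columnsCharacterization} to extract a partition $V=V_{1}\cup\dots\cup V_{l}$ in which every $G[V_{i}]$ is a disjoint union of paths. The lower bound $l\le k$ is then immediate: a primitive grid drawing is in particular a grid drawing, and since $l$ is the minimum number of columns over all grid drawings of $G$, any primitive drawing must use at least $l$ columns, so $k\ge l$.

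For the upper bound I would refine the partition $V_{1},\dots,V_{l}$ and feed it into the implication $(2)\Rightarrow(1)$ of Theorem~\ref{locatingCharacterization}. The key observation is that a linear forest is bipartite, so each class $G[V_{i}]$ can be split into two independent sets at the cost of one additional column. Concretely, I would retain $2^{d-1}$ of the classes as path-classes and split each of the remaining $l-2^{d-1}$ classes into two independent sets, where $d$ is the integer with $2^{d-1}+1\le l\le 2^{d}$. This produces a partition into $2^{d-1}+2(l-2^{d-1})=2l-2^{d-1}$ classes, of which at most $2^{d-1}$ induce disjoint unions of paths while the others induce independent sets. Putting $k=2l-2^{d-1}$, I would verify that this refined partition is exactly of the form demanded by statement~(2) of Theorem~\ref{locatingCharacterization}, and hence obtain a primitive grid drawing on $k$ columns.

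The main obstacle is precisely this final bookkeeping. I have to confirm that the number $2^{d-1}$ of retained path-classes does not exceed the quota $2^{d}-k$ of path-classes that Theorem~\ref{locatingCharacterization} permits for a $k$-column primitive drawing, and simultaneously that $k=2l-2^{d-1}$ lies in the admissible interval $2^{d-1}<k\le 2^{d}$ so that the theorem applies in the intended dimension. Matching these counts against the identity $k=2l-2^{d-1}$ is where the real content sits; everything else is routine, since the decomposition of each path-class into two independent sets is a direct consequence of bipartiteness, and once statement~(2) is met the primitive drawing is delivered by the theorem. I would pay particular attention to the extreme cases $l=2^{d-1}+1$ and $l=2^{d}$, where the interval $[\,l,\,2l-2^{d-1}\,]$ is tightest, to make sure the stated range and dimension are the correct ones.
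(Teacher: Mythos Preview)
Your proposal is essentially identical to the paper's argument: both combine Observation~\ref{columnsCharacterization} with Theorem~\ref{locatingCharacterization} by splitting $l-2^{d-1}$ of the $l$ path-classes into two independent sets each, arriving at the same count $l-(l-2^{d-1})+2(l-2^{d-1})=2l-2^{d-1}$. The only minor deviation is your justification of the lower bound via minimality of $l$; the paper treats $l\le k$ as immediate simply because the construction produces $k=2l-2^{d-1}\ge l$, without assuming $l$ is the compact value.
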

\begin{proof}
The lower bound on $k$ is immediate. To show the upper bound we just combine Observation~\ref{columnsCharacterization} and Theorem~\ref{locatingCharacterization}. It suffices to split each of $l-2^{d-1}$ path-colors into two normal colors. Then the final number of columns is
\[l-(l-2^{d-1})+2(l-2^{d-1})=2l-2^{d-1}.\]
\end{proof}

We can also characterize graphs which can be located on less than $2^{d-1}+1$ columns.
\begin{obs}
\label{locatingEquivalentCompact}
For a graph $G=\left( V,E \right)$ and integers $d \geq 2$ and $l$, $1\leq l \leq 2^{d-1}$, the following statements are equivalent:
\begin{enumerate}
\item $G$ can be located on $l$ columns in $\mathbb{Z}^{d}$,
\item $G$ is embeddable on $l$ columns (in $\mathbb{Z}^{2}$).
\end{enumerate}
\end{obs}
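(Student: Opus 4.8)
The plan is to prove the two implications separately; as in the surrounding results the forward direction is immediate, while the reverse direction, which builds an explicit primitive drawing, carries all the content. For $(1) \Rightarrow (2)$ I would note that a primitive grid drawing is in particular a grid drawing, so a graph located on $l$ columns in $\mathbb{Z}^{d}$ admits a grid drawing on $l$ columns in $\mathbb{Z}^{d}$, and the column-transfer argument described at the beginning of this section (reassign each column to a free column of the plane and shift heights to restore visibility) moves it to a grid drawing on $l$ columns in $\mathbb{Z}^{2}$. Equivalently, one can argue through the partition: in a primitive drawing every intra-column edge must join points of consecutive height, otherwise the segment, lying along a single column, would contain an interior grid point; hence each column induces a subgraph of a path, that is a disjoint union of paths, and Observation~\ref{columnsCharacterization} closes the direction.

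For $(2) \Rightarrow (1)$, the substantial direction, I would start from Observation~\ref{columnsCharacterization}: embeddability on $l$ columns yields a partition $V_{1},\dots,V_{l}$ of $V$ in which every $G[V_{i}]$ is a linear forest. The key idea is that with only $l \le 2^{d-1}$ columns we can afford to choose their ranks among the $2^{d-1}$ vectors of $\{0,1\}^{d-1}$, assigning one distinct such rank to each class $V_{i}$. I would then place the vertices of each $V_{i}$ in its column by laying out each path of the linear forest along a block of consecutive heights, so that every edge of $G[V_{i}]$ joins two points at consecutive heights, while distinct paths occupy disjoint blocks to guarantee injectivity.

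It then remains to check that every edge of $G$ is represented by a primitive segment, after which the drawing is automatically a valid primitive grid drawing, since a primitive segment contains no grid point besides its endpoints and so no third vertex can lie on any edge. An intra-column edge joins points whose first $d-1$ coordinates agree and whose heights differ by one, so its vector of coordinate differences is $(0,\dots,0,1)$ and the segment is primitive. An inter-column edge joins points in columns with distinct ranks from $\{0,1\}^{d-1}$; these ranks differ by exactly one in some coordinate $i \le d-1$, so $\gcd$ of all coordinate differences divides $|a_{i}-b_{i}| = 1$ and the segment is again primitive. Hence $G$ is located on $l$ columns in $\mathbb{Z}^{d}$.

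The crucial observation, rather than a real obstacle, is the choice of ranks in $\{0,1\}^{d-1}$, which renders every inter-column segment primitive for free; this is precisely what breaks down once $l > 2^{d-1}$, when one runs out of pairwise odd-differing ranks and is forced into the more delicate construction of Theorem~\ref{locatingCharacterization} in which some colour classes must be restricted to independent sets. Thus the threshold $2^{d-1}$ separating this observation from Theorem~\ref{locatingCharacterization} is exactly the point at which the available parity patterns of the ranks are exhausted.
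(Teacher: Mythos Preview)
Your proof is correct and follows the same approach as the paper: column transfer for $(1)\Rightarrow(2)$, and assigning each of the $l\le 2^{d-1}$ colour classes a distinct rank from $\{0,1\}^{d-1}$ for $(2)\Rightarrow(1)$. You are in fact more careful than the paper, which simply copies each column of the planar drawing onto such a rank without explaining why intra-column edges become primitive; your explicit relaying of each path at consecutive heights closes that gap.
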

\begin{proof}
Let $G$ is located on $2^{d-1}$ columns in $\mathbb{Z}^{d}$. Then we can take each column of this primitive grid drawing and arrange them in a consecutive order in the plane. Then we might have to shift some columns higher to satisfy the condition on mutual visibility with respect to points representing vertices. On the other hand, if there is a grid drawing of $G$ on $2^{d-1}$ columns in the plane, then we take each column of this drawing and copy it on an unique point from the set \[\left\{ \left( r_{1},\ldots,r_{d-1} \right) \mid 0 \leq x_{i} \leq 1 \right\} \subset \mathbb{Z}^{d-1}\]
\end{proof}

This observation is somehow intuitive as every grid drawing on two columns is primitive. However, we know, according to Theorem~\ref{locatingCharacterization}, that for a larger number of columns this does not hold and locating becomes more restrictive than drawing.

Although we show that locating the graph on bounded number of columns is $\NP$-complete in the following section, there are special classes of graphs for which we can find suitable estimations. The following theorem gives bounds that depend on the maximum degree of a graph. In order to show this, we need an auxiliary lemma proven by L\'{a}szl\'{o} Lov\'{a}sz.

\begin{lem}
[{\cite{lov66}}]
\label{lovaszLemma}
Let $G=\left( V,E \right)$ be a graph and let $k_{1},k_{2},\ldots,k_{m}$ be nonnegative integers with $k_{1} + k_{2} + \ldots + k_{m} \geq \Delta \left( G \right) - m + 1$. Then $V$ can be partitioned into $V_{1}, V_{2}, \ldots, V_{m}$ so that $\Delta\left( G \left[ V_{i} \right] \right) \leq k_{i}$, for all $i \in \left[ m \right]$.
\end{lem}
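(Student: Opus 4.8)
The plan is to prove the lemma by a local-search (extremal) argument over all partitions of $V$ into $m$ labelled classes, minimizing a carefully weighted count of monochromatic edges. For a partition $(V_1,\ldots,V_m)$, write $e_i$ for the number of edges of $G$ with both endpoints in $V_i$, and consider the potential
\[
w(V_1,\ldots,V_m)=\sum_{i=1}^{m}\frac{e_i}{k_i+1}.
\]
Since $V$ is finite there are only finitely many such partitions (empty classes allowed), so $w$ attains a minimum, and each denominator $k_i+1\geq 1$ is positive, so $w$ is well defined even when some $k_i=0$. First I would fix a partition minimizing $w$ and claim that it already satisfies $\Delta(G[V_i])\leq k_i$ for every $i$.

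For the contradiction step I would suppose the minimizer fails the conclusion, so that some vertex $v$ lies in a class $V_i$ with $d_i:=\deg_{G[V_i]}(v)\geq k_i+1$. Writing $d_j:=\deg_{G[V_j]}(v)$ for the number of neighbours of $v$ in each class, relocating $v$ from $V_i$ to another class $V_j$ deletes the $d_i$ monochromatic edges at $v$ inside $V_i$ and creates $d_j$ new ones inside $V_j$, so the potential changes by exactly
\[
\frac{d_j}{k_j+1}-\frac{d_i}{k_i+1}.
\]
Minimality of $w$ forces this quantity to be nonnegative for every $j\neq i$, hence $\frac{d_j}{k_j+1}\geq\frac{d_i}{k_i+1}\geq 1$, that is $d_j\geq k_j+1$ for all $j\neq i$. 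Summing the neighbour counts over all classes then gives
\[
\Delta(G)\geq\deg_G(v)=\sum_{j=1}^{m}d_j\geq\sum_{j=1}^{m}(k_j+1)=m+\sum_{j=1}^{m}k_j\geq m+\bigl(\Delta(G)-m+1\bigr)=\Delta(G)+1,
\]
a contradiction. Thus the minimizing partition has no violating vertex, which is precisely the required conclusion.

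The hard part will be choosing the right potential: the naive idea of minimizing the raw total $\sum_i e_i$ of monochromatic edges does not force the contradiction, since a beneficial relocation of $v$ need not strictly decrease that unweighted count. The weighting by $1/(k_i+1)$ is exactly what turns the local-minimality inequality into the per-class bound $d_j\geq k_j+1$, after which the hypothesis $\sum_j k_j\geq\Delta(G)-m+1$ closes the argument cleanly. The remaining points are routine: the bookkeeping of how $e_i$ and $e_j$ change under a single-vertex move, and the harmless cases $k_i=0$ and empty classes, both already accommodated by the choice of $w$.
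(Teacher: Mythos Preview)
Your argument is correct and is essentially the classical extremal proof due to Lov\'asz. Note, however, that the paper does not supply its own proof of this lemma: it is quoted as an auxiliary result from \cite{lov66} and used as a black box in the proof of Theorem~\ref{locatingDegree}. So there is no ``paper's proof'' to compare against; you have simply reconstructed the standard one.

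One tiny notational quibble: writing $d_j:=\deg_{G[V_j]}(v)$ is awkward when $v\notin V_j$, since $v$ is not a vertex of $G[V_j]$. Your parenthetical clarification (``the number of neighbours of $v$ in each class'') makes the intended meaning clear, so this is cosmetic rather than a gap; you might prefer to write $d_j:=|N_G(v)\cap V_j|$ instead. With that, the bookkeeping of the single-vertex move, the use of minimality to force $d_j\ge k_j+1$ for every $j$, and the final summation yielding $\deg_G(v)\ge \Delta(G)+1$ are all sound.
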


\begin{thm}
\label{locatingDegree}
Let $G = \left( V,E \right)$ be a graph with $\Delta \left( G \right) \leq 2^{d+1} - 1$, for $d \in \mathbb{N}$. Then $G$ can be located on $2^{d}$ columns in $\mathbb{Z}^{d+1}$.
\end{thm}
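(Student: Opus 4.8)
The plan is to reduce the statement, via the dimension-collapsing equivalences already established, to a purely graph-theoretic partition problem that Lemma~\ref{lovaszLemma} solves exactly at the stated degree bound. Working in dimension $d+1$ with $l = 2^{d}$ columns, I first note that $l = 2^{(d+1)-1}$ lies precisely at the threshold covered by Observation~\ref{locatingEquivalentCompact}, and outside the range of Theorem~\ref{locatingCharacterization}, whose hypothesis requires strictly more than $2^{d}$ columns in dimension $d+1$. Hence, by Observation~\ref{locatingEquivalentCompact}, $G$ can be located on $2^{d}$ columns in $\mathbb{Z}^{d+1}$ if and only if $G$ is embeddable on $2^{d}$ columns in $\mathbb{Z}^{2}$, and by Observation~\ref{columnsCharacterization} the latter holds if and only if $V$ admits a partition into $2^{d}$ parts, each inducing a disjoint union of paths.

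Next I would produce such a partition from Lemma~\ref{lovaszLemma}. The crucial choice is to take $m = 2^{d}$ and $k_{1} = \cdots = k_{m} = 1$, so that $\sum_{i} k_{i} = 2^{d}$. The Lov\'{a}sz condition $\sum_{i} k_{i} \geq \Delta(G) - m + 1$ then reads $2^{d} \geq \Delta(G) - 2^{d} + 1$, which rearranges to exactly $\Delta(G) \leq 2^{d+1} - 1$, precisely our hypothesis. The lemma therefore yields a partition $V_{1}, \ldots, V_{2^{d}}$ with $\Delta(G[V_{i}]) \leq 1$ for every $i$.

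Finally I would observe that $\Delta(G[V_{i}]) \leq 1$ forces each $G[V_{i}]$ to be a matching together with isolated vertices; this is in particular a disjoint union of paths of length at most one and is automatically acyclic, exactly the structure required by Observation~\ref{columnsCharacterization}. Thus $G$ embeds on $2^{d}$ columns in the plane, and chaining back through Observation~\ref{locatingEquivalentCompact} produces the desired primitive drawing on $2^{d}$ columns in $\mathbb{Z}^{d+1}$.

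Essentially all of the work is front-loaded into choosing the right parameters, and the single subtle point is using $k_{i} = 1$ rather than $k_{i} = 2$. Allowing degree two would weaken the Lov\'{a}sz requirement, but would let each part contain a cycle, which is not a disjoint union of paths and would break the reduction; restricting to matchings sidesteps this acyclicity obstacle while still meeting the bound $\Delta(G) \leq 2^{d+1} - 1$ with equality. I therefore expect no genuine obstacle beyond this parameter bookkeeping and the verification that $l = 2^{d}$ falls under Observation~\ref{locatingEquivalentCompact} rather than Theorem~\ref{locatingCharacterization}.
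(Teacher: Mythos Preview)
Your argument is correct, and it is genuinely simpler than the paper's. Both proofs begin with the same reduction through Observation~\ref{locatingEquivalentCompact} and Observation~\ref{columnsCharacterization} to the task of partitioning $V$ into $2^{d}$ linear forests. From there the paper proceeds by induction on $d$: the base case $d=1$ is handled by an ad-hoc argument using Brooks' theorem and some recoloring to kill degree-three vertices and cycles, and the inductive step invokes Lemma~\ref{lovaszLemma} with $m=2$ and $k_{1}=k_{2}=2^{d}-1$ to bisect $V$ into two pieces of maximum degree at most $2^{d}-1$, each of which is then split into $2^{d-1}$ linear forests by the induction hypothesis. Your route instead applies Lemma~\ref{lovaszLemma} once, with $m=2^{d}$ and all $k_{i}=1$, obtaining directly a partition into $2^{d}$ matchings.

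The gain from your choice is real: a single invocation of Lov\'{a}sz replaces an entire induction together with the Brooks-based base case, and setting $k_{i}=1$ neatly sidesteps the acyclicity issue you identified, since matchings are automatically linear forests. The only thing the paper's longer argument buys is that its parts may be genuine paths rather than mere matchings, but nothing in the theorem or its applications requires that extra structure. Your parameter check $2^{d}\geq \Delta(G)-2^{d}+1 \iff \Delta(G)\leq 2^{d+1}-1$ is exactly right, and your verification that $l=2^{d}=2^{(d+1)-1}$ lands in the range of Observation~\ref{locatingEquivalentCompact} rather than Theorem~\ref{locatingCharacterization} is also correct.
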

\begin{proof}
According to Proposition~\ref{locatingEquivalentCompact}, it suffices to prove that $G$  is embeddable on $2^{d}$ columns in the plane. To prove this we apply Observation~\ref{columnsCharacterization}. So eventually, we show by induction on $d$ that the assumption in our theorem implies that $V$ can be partitioned into $V_{1},V_{2},\ldots,V_{{2^d}}$ such that every induced subgraph $G \left[ V_{i} \right]$ is isomorphic to a linear forest. As the basis of the induction we use the proof of a weaker theorem proven in~{\cite{cac11}}.

For $d=1$, the graph $G$ is either a complete graph on four vertices or, according to Brooks' theorem, $G$ can be colored with three colors. We know that the graph $K_{4}$ can be drawn on two columns, so the statement holds  in the first case. In the second case, the vertices of $G$ can be partitioned into three color classes $C_{1}$, $C_{2}$ and $C_{3}$ (we label the colors as $c_{1}$, $c_{2}$ and $c_{3}$). Consider the induced subgraph $G\left[C_{1}\cup C_{2}\right]$. If there is a vertex of degree three, then we color it with the color $c_{3}$. Thus we ensured that $\Delta \left( G \left[ C_{1} \cup C_{2} \right] \right) \leq 2$. If there is a cycle left, then we choose its arbitrary vertex and color it with the new color $c_{4}$. Afterwards, the graph $G\left[C_{1}\cup C_{2}\right]$ is isomorphic to a linear forest, but there might be a vertex of degree three in the graph $G\left[C_{3}\cup C_{4}\right]$. If there is such vertex, then we color it to $c_{1}$. After that, the graphs $G\left[C_{1}\cup C_{2}\right]$ and $G\left[C_{3}\cup C_{4}\right]$ are both linear forests.

Now we do the inductive step. Let the maximal degree of $G$ is at most $2^{d+1}-1$. Then, according to Lemma~\ref{lovaszLemma}, $V$ can be partitioned into $V_{1}$ and $V_{2}$ such that $G\left[ V_{1} \right] \leq 2^{d}-1$ and $G \left[ V_{2} \right ] \leq 2^{d}-1$, if we set $m=2$ and $k_{1}=k_{2}=2^{d}-1$. It follows from the inductive step that the vertices of each of the graphs $G \left[ V_{1} \right]$, $G \left[ V_{2} \right]$ can be partitioned into $2^{d-1}$ required sets. Together these partitions give the partition of $V$ into $2^{d-1}+2^{d-1}=2^{d}$ sets.
\end{proof}

Note that the reverse implication does not hold, as every star graph can be located on two columns in the plane and its maximal degree does not have to be bounded.

\section{Mixed Colorings}

We saw that drawing/locating of a graph with bounded number of columns is related to a special form of defective coloring where every color class induces either an independent set or a linear forest. Such coloring is called {\sl mixed} and we use it later to prove $\NP$-completeness of a problem of deciding whether a graph can be drawn/located on $l \geq 2$ columns.

Coloring of $G$ with only path colors is called {\sl path coloring} and, on the other hand, coloring with only normal colors is called {\sl normal coloring}. If we can color a graph $G$ with $a$ normal colors and $b$ path colors, then we say that $G$ is $\left( a,b \right)$-colorable. The class of all $\left( a,b \right)$-colorable graphs is denoted as $\ensuremath{\mathcal{G}}_{a,b}$ and it is referred as a {\sl mixed coloring type}. 

Then we see that $\ensuremath{\mathcal{G}}_{a,b}\supseteq\ensuremath{\mathcal{G}}_{c,d}$ if and only if there is a sequence $\left\{ \ensuremath{\mathcal{G}}_{ai,bi}\right\} _{i=1}^{n}$ such that $a_{1}=a$, $b_{1}=b$, $a_{n}=c$ , $b_{n}=d$ and for every $i\in\left\{1,\ldots,n-1\right\}$ it holds that $a_{i+1}=a_{i}+2$, $b_{i+1}=b_{i}-1$ or $a_{i+1}=a_{i}-1$, $b_{i+1}=b_{i}+1$. That is, there is a sequence of steps where every step corresponds to a substitution of one path color by two normal colors or one normal by one path color.

We can consider the partially ordered set of the set of all mixed coloring types ordered by inclusion. The picture bellow shows the modified Hasse diagram of this POSET where the inclusion corresponds to an oriented path between two types. The inclusion is not total order in this case as there are incomparable elements.

\begin{figure}[h!]
	\centering
	\includegraphics{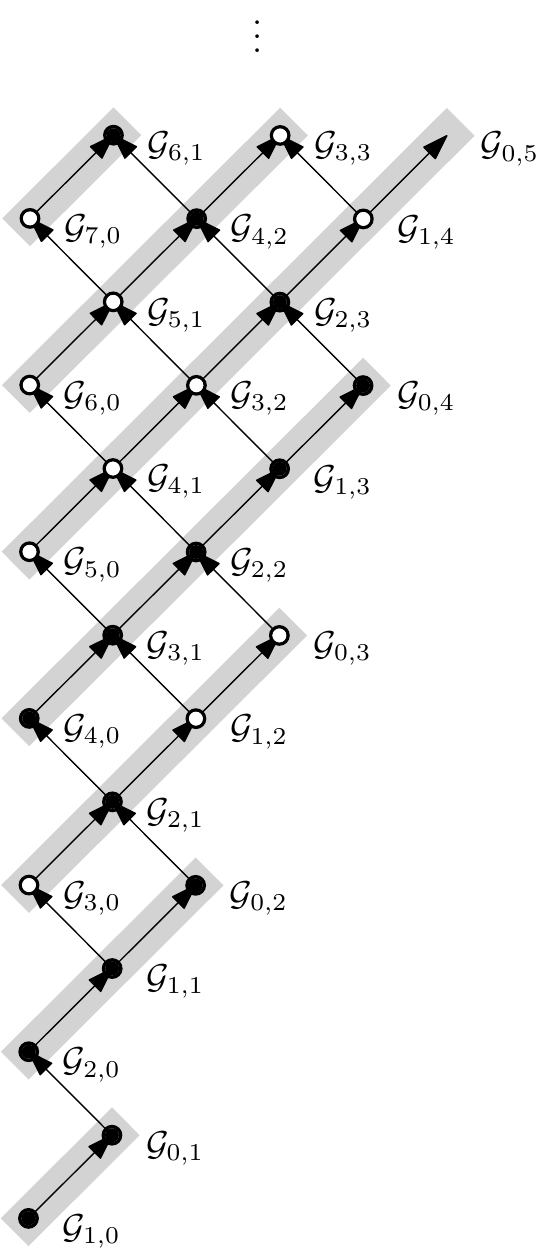}
	\caption{Mixed coloring types ordered by inclusion}
	\label{fig:POSET}
\end{figure}

According to Observation~\ref{columnsCharacterization}, the mixed coloring types which are drawn in the common grey site are classes of graphs that can be drawn on the same number of columns. Similarly, the mixed coloring types denoted as black vertices correspond to the graph classes from Theorem~\ref{locatingCharacterization}.

The Four Color Theorem implies that every planar graph is $\left( 4,0 \right)$-colorable and Wayne Goddard~{\cite{god91}} showed that it is also $\left( 0,3 \right)$-colorable. Thus we get the following corollary.

\begin{cor}
\label{planar3Columns}
Every planar graph can be drawn on three columns.
\end{cor}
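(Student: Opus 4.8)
The plan is to reduce the statement directly to the combinatorial characterization of Observation~\ref{columnsCharacterization} together with the colorability facts quoted immediately above the corollary. Recall that a \emph{path color} is, by definition, a color class inducing a linear forest, that is, a disjoint union of paths. Hence a $(0,3)$-coloring of $G$ is nothing other than a partition of $V$ into three classes $V_1, V_2, V_3$ such that each induced subgraph $G[V_i]$ is a disjoint union of paths. This is precisely the partition condition that appears in Observation~\ref{columnsCharacterization} in the case $l = 3$.

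Concretely, I would first invoke Goddard's result~\cite{god91}, which guarantees that every planar graph admits such a $(0,3)$-coloring. Then I would apply the reverse (``if'') direction of Observation~\ref{columnsCharacterization} with $l = 3$ to the resulting partition, which yields a grid drawing of $G$ on three columns. No additional geometric construction is required, since the observation already supplies the vertical shifting of the vertices within each column that guarantees mutual visibility of the representing points.

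I expect no genuine obstacle in the corollary itself: all of the substance lies elsewhere, namely in the cited $(0,3)$-colorability of planar graphs (which in turn rests on the Four Color Theorem machinery underlying Goddard's argument) and in the previously established Observation~\ref{columnsCharacterization}. The only point to check is the terminological identification between a path color and a color class inducing a disjoint union of paths, which is immediate from the definitions given in this section. The proof is therefore a one-line composition of the two results.
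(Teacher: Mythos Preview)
Your proposal is correct and matches the paper's own argument exactly: invoke Goddard's $(0,3)$-colorability of planar graphs and then apply Observation~\ref{columnsCharacterization} with $l=3$. The only quibble is the parenthetical remark that Goddard's theorem ``rests on the Four Color Theorem machinery,'' which is not the case and is in any event irrelevant to the deduction.
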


C\'{a}ceres et.~al.~{\cite{cac11}} showed that every outerplanar graph can be drawn (and located) on two columns. In the same paper there is introduced an example of a planar graph which is not $\left( 2,1 \right)$-colorable. Thus we need four columns to locate an arbitrary planar graph. The natural question is whether every planar graph is $\left( 1,2 \right)$-colorable. The following proposition shows that using one normal and two path colors is insufficient too.

\begin{prop}
\label{planarNot(1,2)}
There is a planar graph which is not $\left( 1,2 \right)$-colorable.
\end{prop}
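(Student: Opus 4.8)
The plan is to exploit the reformulation that $G$ is $(1,2)$-colorable precisely when there is an independent set $A \subseteq V$ for which $G-A$ can be split into two linear forests (put the normal color on $A$ and the two path-colors on the rest). So to exhibit a planar graph that is \emph{not} $(1,2)$-colorable it suffices to build a planar $G$ such that for \emph{every} independent set $A$ the graph $G-A$ admits no partition into two linear forests.

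First I would record the local constraints obeyed by any $(1,2)$-coloring, writing $A$ for the normal color and $B,C$ for the two path-colors. In every triangle at most one vertex gets color $A$ (the other two are adjacent), and no triangle is monochromatic in $B$ or in $C$; moreover every vertex has at most two neighbors inside its own path-color, and neither $B$ nor $C$ contains a cycle. These observations turn a $(1,2)$-coloring into a strongly restricted $3$-coloring of the triangle-rich parts of $G$, which is what I would attack.

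Next I would fix a small planar \emph{core} graph $H$ that is not $(0,2)$-colorable, i.e. whose vertices cannot be split into two linear forests; such graphs exist, since the vertex linear arboricity of a planar graph can be as large as $3$, and I would certify this for my chosen $H$ by a finite case analysis over all partitions of $V(H)$ into two parts. Because deleting an independent set $A$ might rescue $H$, I would then \emph{robustify} it: replace each vertex of $H$ by a small planar gadget (for instance a triangle or a copy of $K_{4}$) and join the gadgets so that any independent set meets each gadget in at most one vertex. The intended effect is that in $G-A$ a full copy of the obstruction inside $H$ always survives, so that one of $B,C$ is forced to contain a cycle or a vertex of degree three, contradicting the linear-forest condition. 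Planarity is maintained throughout because each gadget and each inter-gadget linkage is drawn locally and the core $H$ is itself planar.

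The hard part will be the interaction between the global \emph{acyclicity} requirement for the path-colors and the adversarial freedom in choosing $A$. Bounding the path-color degrees by $2$ is purely local and is handled by the triangle and degree constraints above, but forbidding a cycle in $B$ or in $C$ is a global condition, and $A$ may be selected precisely to destroy the vertices that pin down the obstruction. The crux is therefore to design the gadgets and their linkages so that, no matter how $A$ is chosen, enough of the core persists in $G-A$ to force an unavoidable monochromatic cycle (or a forbidden degree-$3$ vertex in a path-color). Verifying this robustness of the obstruction under every independent-set deletion, rather than drawing the graph or checking its planarity, is where I expect the real work to lie.
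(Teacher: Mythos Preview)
Your reformulation is correct, and the broad strategy---take a planar core that is not $(0,2)$-colorable and make it robust against the deletion of any independent set---is a reasonable line of attack. But you have not actually carried it out: the robustification is the entire proof, and replacing each vertex by a triangle or a $K_4$ does not by itself guarantee that the obstruction in the core survives in $G-A$. After $A$ removes one vertex from each gadget, what remains of that gadget is just an edge or a triangle, and you have not specified how these remnants are linked so as to reproduce the adjacency structure of the core, nor why the surviving subgraph still fails to split into two linear forests. You correctly flag this as ``where the real work lies,'' but the proposal stops exactly there, so as it stands it is an outline rather than a proof.

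The paper's route is quite different and sidesteps the adversarial-deletion analysis entirely. It builds a planar gadget $H$ consisting of a $K_4$ with a long path inserted in each inner face, and argues directly that in any $(1,2)$-coloring of $H$ the four $K_4$-vertices can use the normal color $\alpha$ together with only \emph{one} of the two path colors: if both path colors appeared on the outer $K_4$, the adjacent internal path could not be completed without producing an $\alpha$--$\alpha$ edge. Three copies of $H$ are then glued so that a single $K_4$ is shared; that $K_4$ is thereby forced to be $(1,1)$-colored, which is impossible since deleting one vertex from $K_4$ leaves a triangle, not a linear forest. So instead of hardening a core against every independent-set deletion, the paper extracts a clean local restriction on the gadget's boundary and propagates it to a contradiction---a shorter and fully explicit argument that never has to quantify over all choices of $A$.
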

\begin{proof}
Let $\alpha$ be the normal color and $\beta$ and $\gamma$ be the path colors we can use. Consider the gadget $H$ depicted in part a) of Figure~\ref{fig:constructionPlanarNot(1,2)}. This gadget is isomorphic to a complete graph on four vertices with a path on ten vertices inside each inner face. The path colors $\beta$ and $\gamma$ cannot both appear on the vertices of the outer face otherwise it is not possible to color the path adjacent to them. We could color at most four vertices of this path with $\beta$ and $\gamma$ in such case, but there would still be an edge with both vertices of color $\alpha$. But this is not possible, since $\alpha$ is normal color. Thus the vertices of the outer face are colored with $\alpha$ and one path color, say $\beta$.

\begin{figure}[h!]
	\centering
	\includegraphics{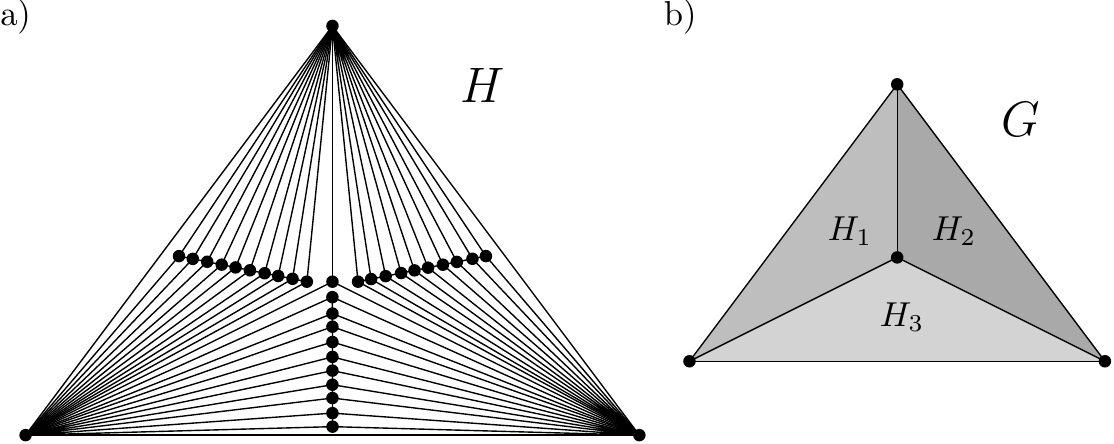}
	\caption{Construction of a planar graph which is not $\left( 1,2 \right)$-colorable}
	\label{fig:constructionPlanarNot(1,2)}
\end{figure}

Now we join three copies $H_{1}$, $H_{2}$ and $H_{3}$ of $H$ as shown in Figure~\ref{fig:constructionPlanarNot(1,2)}, part b), and we obtain the graph $G$. We see that $G$ is not $\left( 2,1 \right)$-colorable, because the only way how to color it with $\alpha$, $\beta$ and $\gamma$ is to color $K_{4}$ with $\alpha$ and $\beta$ and this is clearly not possible.
\end{proof}

It is not difficult to prove that there is an outerplanar graph which is not $\left( 1,1 \right)$-colorable, hence we know the tight estimations on mixed colorability of both planar and outerplanar graphs.

Now our main goal is to prove $\NP$-completeness of problem of deciding whether a graph $G$ is $\left( a,b \right)$-colorable for sufficiently large $a$ and $b$. As a consecutive result we obtain that drawing/locating of graphs on bounded number of columns is a difficult task answering the open question in~{\cite{cac11}}. 

This problem is already partially solved as Glenn G. Chappell, John Gimbel and Chris Hartman~{\cite{chap06}} proved that determining whether $G$ can be colored with $l \geq 2$ path colors is $\NP$-complete. Although this does not answer the question for locating of graphs (we need to prove the statement for general mixed colorings, not only for path colorings), we later apply a similar technique to prove $\NP$-completeness of $\left( a,b \right)$-colorability for sufficiently large $a$ and $b$. 

In the following lemma we prove the initial case by using a reduction to the One-in-three 3SAT problem (see~{\cite{gar90}}).

\begin{lem}
\label{(1,1)NP}
It is $\NP$-complete to decide whether or not a graph $G= \left( V,E \right)$ is $\left( 1,1 \right)$-colorable.
\end{lem}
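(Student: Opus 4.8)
The plan is to prove membership in $\NP$ and then exhibit a polynomial-time reduction from One-in-three 3SAT. Membership is routine: given a coloring that uses one normal and one path color, we verify in polynomial time that the normal color class is independent and that the path color class induces a linear forest (no vertex of degree three and no cycle), so the problem is in $\NP$. The substance lies in the hardness reduction.

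First I would fix the target of the reduction. In One-in-three 3SAT we are given a 3CNF formula and must decide whether there is a truth assignment making \emph{exactly one} literal true in each clause. The idea is to translate each variable and each clause into a small planar-or-general gadget so that a valid $(1,1)$-coloring corresponds precisely to a satisfying assignment of this special type. The normal color $\alpha$ and the path color $\beta$ will encode the two truth values, with the constraint ``$\alpha$ is independent'' forcing that no two adjacent truth-bearing vertices are both assigned the value represented by $\alpha$, and the constraint ``$\beta$ induces a linear forest'' limiting how often the value represented by $\beta$ can occur along any path. I would first build a \emph{variable gadget} that admits exactly two $(1,1)$-colorings, one for each truth value, by forcing a consistent color along a designated set of output vertices; a suitable rigid subgraph (for instance a copy of $K_4$ or a cycle together with attached paths, analogous to the gadget $H$ used in Proposition~\ref{planarNot(1,2)}) can pin down the coloring.

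Next I would construct the \emph{clause gadget}. The key is to exploit that a single path color induces a linear forest, so a vertex incident to three $\beta$-colored neighbors would create a degree-three vertex in the $\beta$-class, which is forbidden; dually, the normal color cannot appear on two adjacent vertices. By wiring the three literal inputs of a clause into a central gadget whose colorability depends on exactly one input being in the ``true'' state, I would enforce the one-in-three condition. Concretely, I would arrange the three literal vertices so that they share a common structure that is $(1,1)$-colorable if and only if precisely one of them carries the true-value color; too few or too many true literals would either force a monochromatic $\alpha$-edge or a degree-three (or cyclic) $\beta$-component. Consistency between occurrences of the same variable in different clauses is maintained by routing copies of the variable's output through paths that propagate its color.

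I expect the clause gadget to be the main obstacle. The difficulty is that ``exactly one true'' is a parity/counting constraint, whereas the two $(1,1)$-coloring constraints are of a mixed local nature (independence for $\alpha$, maximum-degree-two and acyclicity for $\beta$); encoding an exact-one condition out of these without accidentally permitting zero or two true inputs requires a carefully balanced gadget, and one must verify that every partial coloring of the inputs extends to a full $(1,1)$-coloring of the gadget if and only if the one-in-three condition holds. After the gadgets are fixed, I would assemble them, argue that the total construction has size polynomial in the formula, and prove the two directions of correctness: a satisfying one-in-three assignment yields a $(1,1)$-coloring, and conversely any $(1,1)$-coloring of the assembled graph restricts to a valid one-in-three assignment. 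Combining this reduction with membership in $\NP$ completes the proof.
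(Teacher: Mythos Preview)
Your high-level plan coincides with the paper's: reduce from One-in-three 3SAT using variable gadgets and clause gadgets, with the normal color $\alpha$ and the path color $\beta$ encoding truth values. But what you have written is an outline, not a proof --- the entire content of the argument is the explicit construction of the gadgets and the verification that they behave as claimed, and you supply neither.

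Your diagnosis of where the difficulty lies is also inverted. The clause gadget is the \emph{easy} part: it is simply a triangle $K_3$ on the three literal vertices. In any $(1,1)$-coloring of $K_3$, at most one vertex can carry the normal color $\alpha$ (independence), and not all three can carry the path color $\beta$ (a $\beta$-monochromatic $3$-cycle is not a linear forest); hence exactly one literal vertex receives $\alpha$. That is the one-in-three constraint for free, with no ``carefully balanced gadget'' needed.

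The real work, which you wave past with ``a suitable rigid subgraph,'' is the variable gadget. Two things must be forced simultaneously: (i) the terminals $v_k$ and $\overline{v_k}$ receive different colors, and (ii) whenever a literal vertex $c_{i,j}$ in a clause triangle is joined to its terminal $x\in\{v_k,\overline{v_k}\}$, that edge is heterochromatic. Neither is automatic: an edge between two $\beta$-vertices is perfectly legal, so a bare edge $v_k\overline{v_k}$ does not enforce (i), and the bare edge $xc_{i,j}$ does not enforce (ii). The paper attaches a small fixed configuration (a handful of extra vertices $u,w,t,\ldots$ creating a $4$-cycle through $v_k,\overline{v_k}$) so that making $v_k$ and $\overline{v_k}$ the same color, or making $x$ and its clause neighbour the same color, necessarily produces either an $\alpha$-edge or a $\beta$-vertex of degree three. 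Until you write down such a gadget and check both properties --- and check that the converse direction extends a one-in-three assignment to a global $(1,1)$-coloring --- the reduction is not complete.
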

\begin{proof}
Let $F$ be a collection of $m$ clauses $C_{1}, C_{2},\ldots,C_{m}$ over $n$ Boolean variables $v_{1},v_{2},\ldots,v_{n}$ such that each clause $C_{i}$ contains exactly three literals $c_{i,1}$, $c_{i,2}$ and $c_{i,3}$. Each literal $c_{i,j}$, $i \in \left[m\right]$ and $j \in \left\{1,2,3\right\}$, is either $v_{k}$ or $\overline{v_{k}}$ for some suitable $k \in \left[n\right]$. One-in-three 3SAT is a problem of determining whether there is a truth assignment $e$ satisfying $F$ such that each clause in $F$ has exactly one true literal (and thus exactly two false literals).

We construct a graph $G \left( k \right)$ shown in Figure~\ref{fig:G(k)} for each variable $v_{k}$. Then, for each clause $C_{i}$, we construct a graph $G \left( C_{i} \right)$ which is isomorphic to $K_{3}$ and each one of its vertices represents a different literal of the clause $C_{i}$.  Let $G \left( F \right)$ be a graph consisting of all the graphs $G \left( k \right)$ and $G \left( C_{i} \right)$ where the vertex $c_{i,j}$ is adjacent to $v \in V \left( G \left( k \right) \right)$ if and only if the literal $c_{i,j}$ is $v \in \left\{ v_{k}, \overline{v_{k}} \right\}$.
\begin{figure}[h!]
	\label{fig:G(k)}
	\centering
	\includegraphics{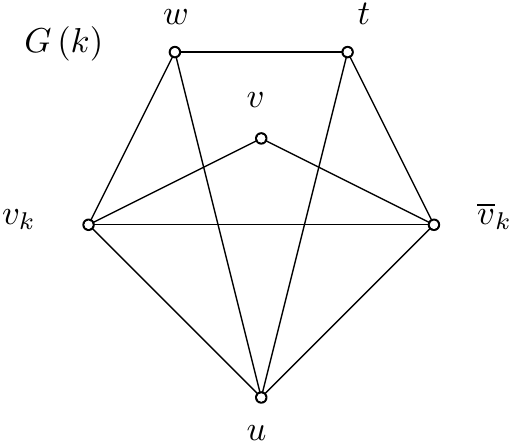}
	\caption{The graph $G \left( k \right)$}
\end{figure}

Suppose that $G$ is colored with one path and one normal color, say black and white. Then the vertices $v_{k}$ and $\overline{v}_{k}$ of $G \left( k \right)$ are colored differently. Otherwise they are black and the vertex $u$ must be white. But then, since white is a normal color, $w$ and $t$ are black and induce a black 4-cycle together with $v_{k}$ and $\overline{v}_{k}$. Also, if the vertices $x \in \left\{ v_{k}, \overline{v}_{k} \right\}$ and $c_{i,j}$ are adjacent, then their colors are different too. Assume to the contrary that $x$ (say $x=v_{k}$) and $c_{i,j}$ are both black and adjacent. Then we know that $\overline{v}_{k}$ is white and thus $u$ and $v$ are black. Hence $v_{k}$ has three black neighbors which is a contradiction.

We define the truth assignment $e$ for $F$ as follows: if $v_{k}$ is white, then $e \left( v_{k} \right)$ is true else $e \left( v_{k} \right)$ is false. The assignment $e$ is correct as the vertices $v_{k}$ and $\overline{v}_{k}$ are not monochromatic. In addition, there is exactly one true literal in every clause. Otherwise there would be a black 3-cycle or an edge with both vertices white in some $G \left( C_{i} \right)$.

Suppose that $e$ satisfies $F$ such that every clause has exactly one true and two false literals. Then we color the labeled vertices of each $G \left( C_{i} \right)$ white, if the corresponding literal is true; otherwise black. By the assumption, there is no monochromatic graph $G \left( C_{i} \right)$. After that, we color the vertex $v \in \left\{ v_{k}, \overline{v}_{k} \right\}$ adjacent to $c_{i,j}$ black (white, respectively) if $c_{i,j}$ is white (black, respectively). Note that the vertices $v_{k}$ and $\overline{v}_{k}$ are, again, differently colored. It remains to color the rest of graph $G \left( k \right)$ for each $k \in \left[n\right]$.
\end{proof}

We use a reduction to the Colorability Problem in the final statement, but this problem is $\NP$-complete for at least three colors, thus we need to consider one more special case. That is $\left( 0,2 \right)$-colorability. Although the following lemma is already known to be true~{\cite{chap06}}, the known proof is based on the result with so called one-defective colorings. For completeness we include a short proof which uses a similar idea as the previous one (a variation of a technique used by Ho\`{o}ng-Oanh Le {\cite{le03}}).

\begin{lem}
\label{(0,2)NP}
It is $\NP$-complete to decide whether or not a graph $G= \left( V,E \right)$ is $\left( 0,2 \right)$-colorable.
\end{lem}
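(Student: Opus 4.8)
The plan is to follow the reduction of Lemma~\ref{(1,1)NP}, adapted to the situation in which \emph{both} colors are path-colors. Deciding $(0,2)$-colorability clearly lies in \NP, since from a given $2$-coloring we verify in polynomial time that each color class has maximum degree at most two and contains no cycle, that is, is a linear forest. For the hardness part I would reduce from Not-All-Equal $3$SAT, the symmetric companion of the One-in-three $3$SAT used before and likewise \NP-complete~\cite{gar90}; this is the natural source problem here, because a triangle admits a $(0,2)$-coloring precisely when its three vertices do not all receive the same color (a monochromatic triangle is a forbidden cycle, while any $2$-to-$1$ split induces a single edge and an isolated vertex), which is exactly the not-all-equal condition on a clause.

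As in the proof of Lemma~\ref{(1,1)NP}, I would build for each clause $C_i$ a triangle $G\left(C_i\right)\cong K_3$ on three literal vertices $c_{i,1},c_{i,2},c_{i,3}$, and for each variable a gadget $G\left(k\right)$ carrying two distinguished vertices $v_k,\overline{v}_k$ that any $(0,2)$-coloring is forced to color differently. Each literal vertex is then joined to the vertex naming its literal through the same difference-enforcing mechanism, so that the color of $c_{i,j}$ is the opposite of its anchor and therefore records the truth value of the literal in a globally consistent way. A satisfying not-all-equal assignment then colors every clause triangle non-monochromatically, giving a valid $(0,2)$-coloring; conversely, in any $(0,2)$-coloring no clause triangle is monochromatic, so reading the literal colors back yields a not-all-equal assignment. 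The remaining check is that the literal vertices, which have total degree three, never collect three neighbors of their own color nor close a monochromatic cycle, so that the coloring extends exactly when the formula is satisfiable.

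The main obstacle is the construction of the difference-enforcing gadget, which has to \emph{simulate a normal edge using only path-color constraints}. In Lemma~\ref{(1,1)NP} the inequality $v_k\neq\overline{v}_k$ came for free from the normal color, whose endpoints must differ and thereby closed a forbidden monochromatic $4$-cycle; with two path-colors this device is gone, so I must instead arrange the gadget so that coloring the two vertices alike is \emph{itself} what forces a monochromatic cycle or a vertex of monochromatic degree three, while both complementary colorings of the pair remain extendable over the gadget. Designing such a gadget and carrying out the accompanying local case analysis — and confirming that distinct gadgets interact only through the clause triangles — is the delicate point, and is precisely the variation of the technique of Ho\`{o}ng-Oanh Le~\cite{le03} to which the statement alludes; that the resulting problem is \NP-complete was already observed in~\cite{chap06}.
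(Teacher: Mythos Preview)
Your plan is correct and matches the paper's approach almost exactly: the paper also reduces from Not-All-Equal $3$SAT, uses the same triangle clause gadget $G(C_i)\cong K_3$, and replaces the variable gadget $G(k)$ of Lemma~\ref{(1,1)NP} by one in which coloring $v_k$ and $\overline{v}_k$ alike forces the remaining gadget vertices into a monochromatic $4$-cycle, while the literal--anchor edge is heterochromatic because otherwise the anchor would have three neighbors of its own color. The only thing you leave open---the explicit gadget---is supplied in the paper by a small figure, so your identification of ``what the gadget must do'' is precisely what the paper's gadget does.
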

\begin{proof}
The main idea is the same as before. We use a reduction to a variation of 3SAT problem, only this time we use Not-All-Equal 3SAT (see {\cite{gar90}}). It is a problem of determining whether there is a truth assignment satisfying a formula such that each clause has at least one true literal. So, let the notation be the same as in Lemma~\ref{(1,1)NP} with the only difference that instead of $G \left( k \right)$ we use the graph depicted in Figure~\ref{fig:newG(k)}.

\begin{figure}[h!]
	\label{fig:newG(k)}
	\centering
	\includegraphics{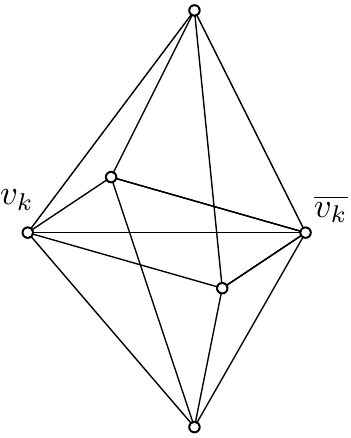}
	\caption{The new graph $G \left( k \right)$}
\end{figure}

Let $G$ be colored with two path colors black and white. One can easily show that it holds again that the vertices $v_{k}$ and $\overline{v}_{k}$ have distinct colors. Otherwise the remaining vertices of $G \left( k \right)$ induce a monochromatic 4-cycle. The adjacent vertices $x \in \left\{ v_{k}, \overline{v_{k}} \right\}$ and $c_{i,j}$ are also heterochromatic. Otherwise $x$ would have three neighbors of the same color.

Now, we can define the truth assignment as follows: if $v_{k}$ is white, then $e \left( v_{k} \right)$ is true else $e \left( v_{k} \right)$ is false. The previous facts imply correctness of this assignment and there is at least one true literal in every clause, otherwise $G\left( C_i \right)$ would be monochromatic 3-cycle. The proof of the reverse implication is analogous too.
\end{proof}

\begin{thm}
\label{mixedColoringNP}
It is $\NP$-complete to decide whether or not a graph $G= \left( V,E \right)$ is $\left( a,b \right)$-colorable where $a+b \geq 2$ and $\left(a,b\right) \neq \left(2,0\right)$.
\end{thm}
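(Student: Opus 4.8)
The plan is to treat membership and hardness separately, and to split the hardness proof according to the value of $a+b$. Membership in $\NP$ is immediate: a nondeterministically guessed partition of $V$ into $a+b$ classes can be checked in polynomial time, since testing that a class induces an independent set, respectively a linear forest (no cycle and maximum degree at most two), is easy. For hardness, the only base cases are those with $a+b=2$: the pair $(1,1)$ is handled by Lemma~\ref{(1,1)NP} and the pair $(0,2)$ by Lemma~\ref{(0,2)NP}, while $(2,0)$ is excluded precisely because $(2,0)$-colorability is bipartiteness, which is polynomial-time solvable. So it remains to settle every $(a,b)$ with $a+b\ge 3$.

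For $a+b\ge 3$ I would reduce from \textsc{$k$-Colorability} with $k=a+b$, which is $\NP$-complete exactly because $k\ge 3$; this is the reduction to the colorability problem alluded to above, and it also explains why the case $a+b=2$ (where $k=2$ is polynomial) must be done separately. When $b=0$ there is nothing to do, since $(a,0)$-colorability is literally proper $a$-coloring. The substance is the case $b\ge 1$, where path colors give extra freedom that must be neutralised. Given an instance $H$ of \textsc{$(a+b)$-Colorability} I would build $H'$ by replacing every edge $uv\in E(H)$ with a copy of a fixed \emph{inequality gadget} $F_{uv}$ whose two terminals are identified with $u$ and $v$; the gadget is designed so that (i) in every $(a,b)$-coloring of $H'$ the terminals receive different colors, and (ii) any assignment of two distinct colors to the terminals extends to a valid $(a,b)$-coloring of the whole gadget. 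Granting such a gadget, the reduction is routine: restricting an $(a,b)$-coloring of $H'$ to $V(H)$ gives, by (i), a proper $(a+b)$-coloring of $H$; conversely a proper $(a+b)$-coloring of $H$ uses each color class as an independent set, hence is a legal $(a,b)$-coloring of $V(H)$ that extends over the gadgets by (ii). Since each $F_{uv}$ has constant size depending only on $(a,b)$, the construction is polynomial, giving $\NP$-completeness.

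The heart of the argument, and the step I expect to be the main obstacle, is the construction of the inequality gadget for arbitrary $a$ and $b\ge 1$. The difficulty is that a bare edge $uv$ only forbids the terminals from sharing a \emph{normal} color (a monochromatic edge already violates the independent-set condition), whereas two adjacent vertices are allowed to share a \emph{path} color. Thus the gadget must force that, if the terminals were to share any of the $b$ path colors, then that color class would contain either a monochromatic triangle or a vertex of monochromatic degree three, each of which destroys the linear-forest property. This is exactly the mechanism exploited in the proofs of Lemmas~\ref{(1,1)NP} and~\ref{(0,2)NP}, where a vertex with three neighbours of the same color, or a monochromatic short cycle, is made to appear; the work here is to realise it simultaneously for every path color while keeping the palette of $a$ normal and $b$ path colors available, and then to verify the flexibility condition (ii). Once the gadget is in hand, combining the base cases from the two lemmas with the colorability reduction yields the theorem for all $(a,b)$ with $a+b\ge 2$ and $(a,b)\neq(2,0)$.
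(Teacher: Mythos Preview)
Your overall architecture matches the paper's: membership in $\NP$ is clear, the two base cases $(1,1)$ and $(0,2)$ are delegated to Lemmas~\ref{(1,1)NP} and~\ref{(0,2)NP}, and for $a+b\ge 3$ you reduce from \textsc{$(a+b)$-Colorability}. The divergence is in \emph{how} you force the colors on $V(H)$ to behave properly, and there your proposal has a genuine gap.

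You postulate an edge-replacement inequality gadget $F_{uv}$ satisfying (i) and (ii), and explicitly flag its construction as ``the main obstacle'', but you never build it. That is the entire content of the reduction, so as written the proof is incomplete. Moreover, even granting a gadget with properties (i)--(ii) as you state them, the reduction need not work: condition (ii) only promises an extension inside a \emph{single} gadget, but a vertex $u$ of $H$ is a terminal of $\deg_H(u)$ many gadgets simultaneously. If the extension in several of them gives $u$ a neighbour of its own path color, then after gluing the gadgets $u$ may acquire monochromatic degree $\ge 3$ in $H'$ even though each gadget is individually valid. You would need a stronger flexibility condition (e.g.\ the extension never gives a terminal a neighbour of its own color), and you have not argued that such a gadget exists.

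The paper sidesteps both issues with a much simpler \emph{vertex} gadget: to every vertex $v$ of $G$ attach two disjoint copies of $K_{a+2b-1}$, each completely joined to $v$. In one direction, a proper $(a+b)$-coloring of $G$ extends by using, in each attached clique together with $v$, every normal color once and every path color twice. In the other direction, if an edge $uv$ of $G$ were monochromatic in some path color $p$, a counting argument (a clique on $a+2b-1$ vertices cannot avoid $p$ entirely, since the remaining colors cover at most $a+2(b-1)$ vertices) shows each of the two cliques at $u$ contributes a $p$-neighbour, so $u$ has at least three $p$-neighbours. This clique attachment gives the ``three monochromatic neighbours'' obstruction directly and globally, with no per-edge gadget and no gluing issue.
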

\begin{proof}
We apply  a reduction to the Colorability Problem. That is, a problem of determining whether or not it is possible to color a given graph $G$ with $k$ colors. If we set $k=a+b$, then we can assume, according to the previous lemmas, that $k \geq 3$. The Colorability Problem is $\NP$-complete in such case, thus we can consider the reduction. Suppose that $G$ is a given graph. Let us create the graph $H$ by joining two disjoint copies of the complete graph $K_{a+2b-1}$ to every vertex $v$ of $G$.

Suppose that $G$ is colored with $k$ normal colors. Then we color the cliques for every vertex $v$ with all colors. Two vertices per path color and one per normal color.

On the other hand, if $H$ is colored with $a$ normal and $b$ path colors, then $G$ is colored with at most $k=a+b$ normal colors. Assume to the contrary that there is an edge $uv$ with both vertices colored with the same path color (say black) in $G$. Then $u$ has at least three black neighbors, because the sizes of the adjacent cliques imply that there is at least one other black vertex in every one of them. This is a contradiction since the coloring of $H$ is correct.
\end{proof}

\begin{cor}
\label{drawingNP}
It is $\NP$-complete to decide whether or not it is possible to draw a given graph on $l \geq 2$ columns.
\end{cor}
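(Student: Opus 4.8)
The plan is to recognize that drawing on columns is exactly the purely path-colored special case of mixed coloring, so the corollary follows from the work already done rather than requiring a fresh reduction. First I would invoke Observation~\ref{columnsCharacterization}, which states that a graph $G$ is embeddable on $l$ columns if and only if $V$ can be partitioned into $V_{1},\ldots,V_{l}$ with every induced subgraph $G\left[V_{i}\right]$ a disjoint union of paths. In the terminology of this section, such a partition is precisely a coloring using $l$ path-colors and no normal colors; that is, drawing $G$ on $l$ columns is equivalent to $G$ being $\left(0,l\right)$-colorable.

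With this translation in hand, the \NP-hardness is immediate from Theorem~\ref{mixedColoringNP}. Setting $a=0$ and $b=l$, I would verify that the hypotheses of that theorem hold for every fixed $l \geq 2$: indeed $a+b=l\geq 2$, and $\left(a,b\right)=\left(0,l\right)\neq\left(2,0\right)$ since the first coordinate vanishes. Hence deciding $\left(0,l\right)$-colorability, and therefore deciding drawability on $l$ columns, is \NP-hard. (The base of this hierarchy, $l=2$, is covered because Lemma~\ref{(0,2)NP} establishes \NP-completeness of $\left(0,2\right)$-colorability, which Theorem~\ref{mixedColoringNP} then lifts to all larger $l$.)

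For membership in \NP, a partition of $V$ into $l$ classes serves as a certificate: one checks in polynomial time that each induced subgraph $G\left[V_{i}\right]$ has maximum degree at most two and is acyclic, i.e.\ is a linear forest. Combining hardness with membership yields \NP-completeness. I do not expect any genuine obstacle, as all of the combinatorial difficulty was absorbed into Theorem~\ref{mixedColoringNP}; the only point requiring a moment's care is confirming that the excluded pair $\left(2,0\right)$ never coincides with $\left(0,l\right)$, which is clear since $l\geq 2$ forces the second coordinate to be nonzero while the first is zero.
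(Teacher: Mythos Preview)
Your argument is correct and is exactly the intended one: the paper states this corollary without proof precisely because it follows immediately from Observation~\ref{columnsCharacterization} (drawing on $l$ columns is $(0,l)$-colorability) together with Theorem~\ref{mixedColoringNP} applied with $a=0$, $b=l$. Your verification that $(0,l)\neq(2,0)$ and your remark on \NP\ membership are the only details to check, and you handle them properly.
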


\begin{cor}
\label{locatingNP}
It is $\NP$-complete to decide whether or not it is possible to locate a given graph on $l \geq 2$ columns (in a grid of sufficiently large dimension).
\end{cor}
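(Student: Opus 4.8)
The plan is to deduce the statement directly from the drawing result in Corollary~\ref{drawingNP}, exploiting the fact that once the ambient dimension is large enough, locating on a fixed number of columns collapses to ordinary drawing. Fix $l \geq 2$ and choose the dimension $d = \lceil \log_2 l \rceil + 1$, so that $d \geq 2$ and $l \leq 2^{d-1}$; this is the precise sense in which the grid is ``of sufficiently large dimension.'' With this choice Observation~\ref{locatingEquivalentCompact} applies and tells us that $G$ can be located on $l$ columns in $\mathbb{Z}^d$ if and only if $G$ is embeddable on $l$ columns in the plane. Hence the locating decision problem (in this dimension) and the drawing decision problem are literally the same language, and Corollary~\ref{drawingNP} already asserts that the latter is $\NP$-complete for $l \geq 2$.

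First I would settle membership in $\NP$. Rather than certifying with an explicit primitive grid drawing, whose coordinates could be large, I would use the combinatorial description: by Observations~\ref{locatingEquivalentCompact} and~\ref{columnsCharacterization}, locatability on $l$ columns in $\mathbb{Z}^d$ is equivalent to the existence of a partition of $V$ into $l$ classes, each inducing a disjoint union of paths. Such a partition is a certificate of size $O(\left| V \right|)$, and verifying that every class induces a linear forest (maximum degree at most two and no cycle) takes polynomial time. For hardness I would invoke Corollary~\ref{drawingNP} with the identity instance map, since in the chosen dimension ``$G$ is locatable on $l$ columns'' and ``$G$ is drawable on $l$ columns'' define the same problem. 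If one prefers to trace the chain back to Theorem~\ref{mixedColoringNP}, the relevant mixed type is $(a,b)=(0,l)$, which satisfies $a+b=l\geq 2$ and $(a,b)\neq(2,0)$, so the reduction from the Colorability Problem used there applies verbatim.

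The only point that needs genuine care, and it is the conceptual heart of the statement, is the choice of dimension, because the combinatorial meaning of ``$l$ columns'' depends on $d$. For the tight dimension $2^{d-1} < l \leq 2^d$, Theorem~\ref{locatingCharacterization} characterizes locatability by the mixed type $(2l - 2^d,\, 2^d - l)$, a strictly stronger requirement than $(0,l)$-colorability; passing to a larger $d$ with $l \le 2^{d-1}$ is exactly what relaxes locating back to drawing. I would therefore make explicit that the decision problem is posed in a dimension $d$ with $l \le 2^{d-1}$, so that Observation~\ref{locatingEquivalentCompact} is available, and note that no smaller-dimension subtlety can interfere, since that observation fixes the equivalence for every such $d$ simultaneously.
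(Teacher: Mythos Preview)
Your argument is correct, and since the paper states this corollary without proof, your route via Observation~\ref{locatingEquivalentCompact} is exactly one of the two intended derivations. By taking $d$ with $l\le 2^{d-1}$ you collapse locating on $l$ columns to drawing on $l$ columns and then invoke Corollary~\ref{drawingNP}; your handling of $\NP$ membership through the linear-forest partition certificate is clean and avoids any coordinate-size issues.

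The other route the paper's development points to---and which your final paragraph already anticipates---is to stay in the tight dimension $2^{d-1}<l\le 2^d$ and use Theorem~\ref{locatingCharacterization} to identify locating on $l$ columns with $(2l-2^d,\,2^d-l)$-colorability, then apply Theorem~\ref{mixedColoringNP} directly (here $a+b=l\ge 2$ and $(a,b)=(2,0)$ would force $d=1$, which is excluded). Both routes terminate in Theorem~\ref{mixedColoringNP}; yours is the shorter one and makes the phrase ``sufficiently large dimension'' do actual work.
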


\section{Planar Grid Drawings}

Although Theorem~\ref{complexityTheorem} and the Four Color Theorem imply that every planar graph is locatable, the drawings obtained by this approach do not have to be planar. On the other hand, De Fraysseix, Pach, and Pollack~\cite{fray90}, Schnyder~\cite{schnyder90}, and Chrobak and Nakano~\cite{chrobak95} proved that any planar graph on $n$ vertices has a planar grid drawing which can be realized in grids of sizes $\left( 2n - 4\right) \times \left( n - 2\right)$, $\left( n - 2\right) \times \left( n - 2\right)$ and $\left\lfloor 2\left( n - 1\right)/3 \right\rfloor \times \left( 4\left\lfloor 2\left ( n - 1\right)/3 \right\rfloor-1\right)$, respectively. Unfortunately, these drawings are not primitive.

\begin{defn}
A primitive planar grid drawing is said to be {\sl proper}.
\end{defn}

In this section we show that the Four Color Theorem together with F\'{a}ry's theorem imply the existence of a proper grid drawing for every planar graph.

\begin{thm}
\label{proper}
There exists a proper grid drawing for every planar graph.
\end{thm}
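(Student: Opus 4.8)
The plan is to reduce the statement to the following concrete task: produce a straight-line drawing of the planar graph $G$ with all vertices at integer points that is simultaneously planar (no two edges cross and no vertex lies in the relative interior of a non-incident edge) and \emph{primitive} (every edge is a primitive segment). Such a drawing is exactly a proper grid drawing. I would start from F\'{a}ry's theorem, which gives a straight-line planar drawing of $G$ in the plane; after a generic infinitesimal perturbation I may assume it is in general position, so that no three vertices are collinear and no vertex lies on a non-incident edge. The non-crossing condition and these general-position conditions are all open, so there is a tolerance $\rho > 0$ such that moving every vertex within distance $\rho$ of its position leaves the drawing planar. Scaling the whole picture by a large integer factor $M$ (an affine map, hence harmless for planarity) enlarges this tolerance to $M\rho$, giving a ball of radius $M\rho$ of admissible integer positions around each vertex. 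Within these balls the drawing stays planar no matter which grid point I pick for each vertex; the only remaining job is to pick them so that every edge becomes primitive.

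Here is where the Four Color Theorem enters. I would fix a proper $4$-colouring of $G$ and identify the four colours with the four parity points of $\{0,1\}^2$. These four points have the pleasant arithmetic feature that any two of them differ by a vector with a $\pm 1$ coordinate, hence a vector that is nonzero modulo \emph{every} prime. I would then require the chosen position $\phi(v)$ of each vertex $v$ to be congruent, in both coordinates, to the parity point of its colour modulo the primorial $P=\prod_{\ell \le B}\ell$, for a threshold $B$ to be fixed. Because adjacent vertices receive different colours, for every edge $uv$ and every prime $\ell \le B$ the difference $\phi(u)-\phi(v)$ is nonzero modulo $\ell$ in at least one coordinate; equivalently, no prime $\ell \le B$ divides $\gcd\left(\left|\phi(u)_1-\phi(v)_1\right|,\left|\phi(u)_2-\phi(v)_2\right|\right)$. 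Thus a single congruence condition, driven by the colouring, simultaneously removes all small prime obstructions to primitivity for every edge at once. (This is the two-dimensional, $q=2$ avatar of the arithmetic already exploited in Lemma~\ref{firstImp} and Theorem~\ref{complexityTheorem}.)

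It remains to defeat the primes $\ell > B$. Here I would place the vertices greedily in any order, choosing for each $v$ a grid point inside its ball of radius $M\rho$ that lies in the prescribed residue class modulo $P$ and is coprime, coordinatewise, to every already-placed neighbour. For a fixed neighbour $u$ and a fixed prime $\ell > B$ (so $\ell \nmid P$), the positions that collide with $u$ modulo $\ell$ form a fraction about $1/\ell^{2}$ of the admissible sublattice; since $\sum_{\ell>B}\ell^{-2}=O\left(1/(B\log B)\right)$, the total forbidden fraction coming from all large primes and all neighbours of $v$ stays below $1$ as soon as $B$ exceeds a constant multiple of $\Delta(G)$. Taking $B$ of this size and then $M$ large enough that the ball of radius $M\rho$ comfortably contains lattice points of the residue class modulo $P$, a valid position always survives, and the greedy procedure terminates with a planar, primitive drawing.

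The delicate point, and the step I expect to be the real obstacle, is this balancing act with the primes. One cannot simply kill \emph{all} primes by congruences: handling every prime up to the diameter would force a modulus of size exponential in that diameter, which dwarfs the available room $M\rho$. The argument therefore has to split the primes into a small range, where the Four Color Theorem lets one congruence condition work uniformly for every edge regardless of degree, and a large range, where only the spatial density of primitive vectors is available and the bound $B\gtrsim\Delta(G)$ is exactly what makes the counting close. Verifying that a grid point survives all the congruence constraints imposed by a high-degree vertex's neighbours, while remaining inside its planarity ball, is the crux; everything else (F\'{a}ry's theorem, the affine scaling, and the reduction to primitivity) is routine.
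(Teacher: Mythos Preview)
Your proposal is sound and shares the paper's skeleton---start from a F\'{a}ry drawing, use the Four Color Theorem to assign residues, exploit an open planarity tolerance to snap vertices to a fine grid---but your treatment of the primes is organised differently from the paper's, and it is worth spelling out the contrast.

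The paper proceeds in two passes over the coordinates. First it snaps only the $x$-coordinates, choosing for each vertex a vertical line whose index is congruent to the first colour-bit modulo $2$ and modulo $3$; this already fixes all horizontal differences, so the (finite) set $P$ of primes dividing some $|\phi(u)_1-\phi(v)_1|$ is now known. In the second pass it refines the $y$-axis finely enough that every neighbourhood meets $\prod_{p\in P} p$ horizontal lines, and for each $p\ge 5$ in $P$ it assigns the four colours four \emph{distinct} residues modulo $p$; the second colour-bit handles $p=2,3$. Since adjacent vertices have different colours, for every edge and every $p\in P$ the $y$-difference avoids $p$, so the gcd is $1$. No density or greedy argument is needed: once $P$ is known, the placement is purely a Chinese-Remainder computation.

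You instead split the primes at a threshold $B\asymp\Delta(G)$: all $\ell\le B$ are killed simultaneously by a single congruence modulo the primorial (using that any two parity points in $\{0,1\}^2$ differ by a unit in some coordinate), and the primes $\ell>B$ are handled greedily via the density bound $\sum_{\ell>B}\ell^{-2}=O(1/(B\log B))$. This works, but the counting step has to be done carefully (for $\ell$ comparable to the window size the ``fraction $1/\ell^2$'' is not literally true, though the absolute number of bad points is still $O(1)$ per prime and sums to $o(M^2)$ for $M$ large). The paper's two-pass trick buys you a fully deterministic construction with no such estimate; your symmetric threshold approach makes the dependence on $\Delta$ visible from the start and treats both coordinates on an equal footing. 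Both lead to grids of height roughly exponential in $\Delta$, matching the paper's $O(n^2)\times O(n^{2\Delta+2})$ bound.
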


\begin{proof}
The main idea is to map a planar drawing of a graph, where line segments correspond to edges, to a grid such that no line segment contains more than two grid points. To find convenient coordinates we use the Four Color Theorem.

Let $G=\left( V,E\right)$ be a planar graph and let $\phi \left( G \right)$ be its initial planar embedding whose existence is ensured by, for example, F\'{a}ry's theorem.  The mapping $\phi$ maps vertices of $G$ to points with real coordinates in the plane. The edge $uv \in E$ corresponds to the line segment $\overline{\phi \left( u \right)\phi \left( v \right)}$ in the embedding $\phi \left( G \right)$. Let $f \colon V \to C$ be a vertex coloring of $G$ with four colors and let $C=\left\{ \left( 0,0 \right), \left( 0,1 \right), \left( 1,0 \right), \left( 1,1 \right) \right\}$. The first coordinate of color $c \in C$ is denoted as $c_{1}$, the second one as $c_{2}$. The existence of $f$ is ensured by the Four Color Theorem.

Let $r \in \mathbb{R}$ denote the smallest distance such that every vertex can be shifted by $r$ in any direction so that the condition on planarity holds still. We can set $r$ as one-half of the minimum distance between two points $x$, $y \in \mathbb{R}^{2}$ such that $x$ and $y$ belong to line segments which represent two vertex disjoint edges of $G$. The distance $r$ is positive, otherwise we get a contradiction with planarity of $\phi \left( G \right)$. Thus, for every vertex $v \in V$, there is an open neighborhood $\Omega \left( v,r \right)$ of the point $\phi \left( v \right)$ such that any point $x \in \Omega \left( v,r \right)$ can represent the vertex $v$ without violating the condition on planarity. Let us assume that no vertical line segment intersects two different neighborhoods $\Omega \left( u,r \right)$, $\Omega \left( v,r \right)$. Otherwise we can lower the distance $r$ as no two points $\phi \left( u \right)$, $\phi \left( v \right)$ lie on the same vertical line.

Now we put vertical lines across the whole plane such that the distance between two consecutive lines is $\epsilon>0$. We choose the number $\epsilon$ such that every neighborhood is crossed by at least six lines (we can assume that $\epsilon=1$). Then we choose one line and declare it as the initial line. Each line gets number according to its order, the initial line has number zero. Now for every vertex $v \in V$, we set $\phi \left( v \right)=x$, where $x$ is a point from $\Omega \left( v,r \right)$ such that it lies on some vertical line with number $l$ and $l \equiv f \left( v \right)_{1} \left( \bmod 2 \right)$, $l \equiv f \left( v \right)_{1} \left( \bmod 3 \right)$. We can always choose such line, because there are six consecutive lines crossing the neighborhood $\Omega \left( v,r \right)$. Thus numbers of these lines get through all values modulo two and three. In the rest of the proof, we assume that the first coordinates of points representing the vertices of $G$ are integers. The point $x$ is in $\Omega \left( v,r \right)$, so the modified embedding is still planar. By choosing appropriate lines we can also ensure that no two adjacent vertices lie on the same vertical line (but we might have to cross the neighborhoods by twelve lines).

\begin{figure}[h!]
	\centering
	\includegraphics{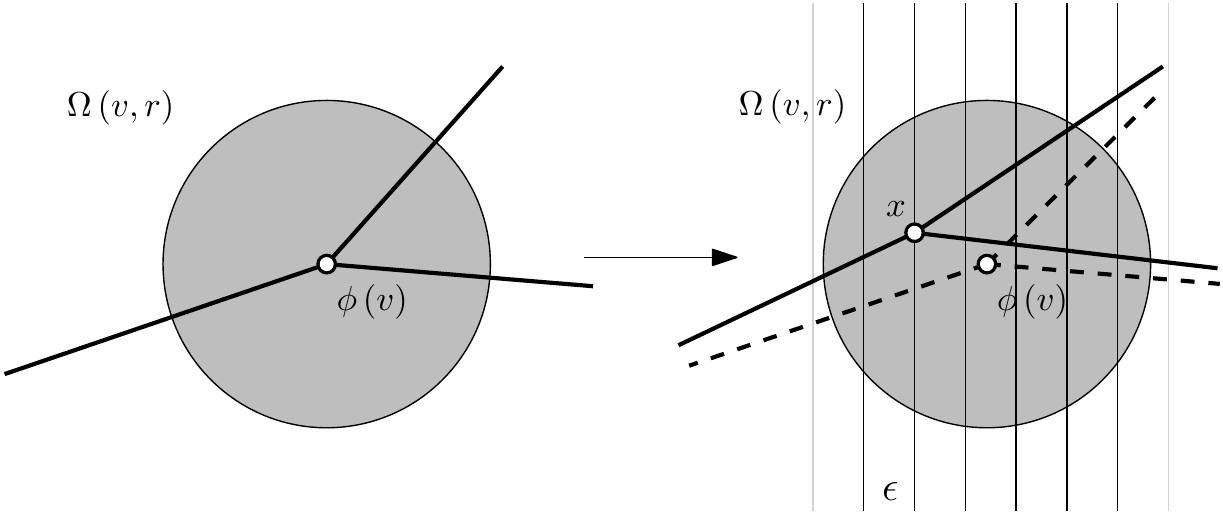}
	\caption{Placing the vertical lines}
	\label{fig:proper1}
\end{figure}

Let $P$ denote the set of all prime numbers which appear in the decomposition of the difference $\left| \phi \left( u \right)_{1} - \phi \left( v \right)_{1} \right|$ where $\phi \left( u \right)_{1}$, $\phi \left( v \right)_{1}$ are the first coordinates of points $\phi \left( u \right)$, $\phi \left( v \right)$ and $uv \in E$. The set $P$ is finite, because no two points representing vertices lie on the same vertical line and thus the difference is always positive. Now we analogously put horizontal lines across the whole plane such that the distance between two consecutive lines is $\delta>0$. This time we choose $\delta$ such that every vertical line is crossed by at least $\prod_{p \in P}{p}$ lines in every neighborhood. 

\begin{figure}[h!]
	\centering
	\includegraphics{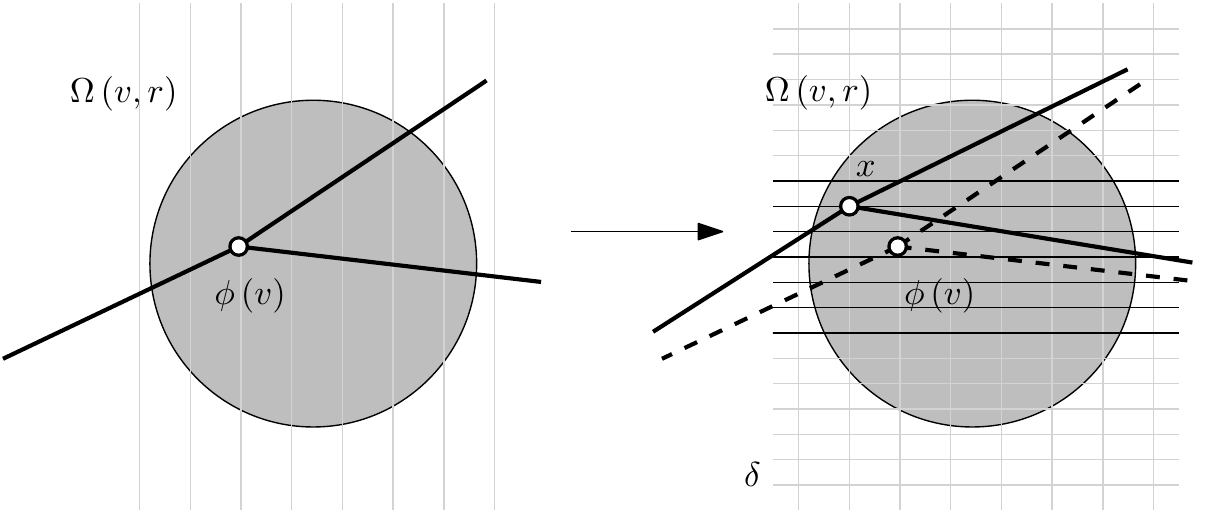}
	\caption{Placing the horizontal lines}
	\label{fig:proper2}
\end{figure}

Again, we declare one of these lines as initial and number them according to their order. Then, for every vertex $v \in V$, we set $\phi \left( v \right)=x$ such that $x \in \Omega \left( v,r \right)$, the first coordinate of $\phi \left( v \right)$ remains the same and $x$ lies on the horizontal line with number $l$, where $l \equiv f \left( v \right)_{2} \left( \bmod 2 \right)$, $l \equiv f \left( v \right)_{2} \left( \bmod 3 \right)$. In addition, if there is another prime number $p$ which divides the difference  $\left| \phi \left( u \right)_{1}-\phi \left( v \right)_{1} \right|$, $uv \in E$, then we set such horizontal lines for $u$ and $v$ that their numbers are not congruent modulo $p$. The different residues modulo $p$ can be chosen according to the coloring $f$. Each color corresponds to an unique residue modulo $p$ ($p>4$, so there is enough residues). We chose $\delta$ such that there is enough horizontal lines from which we can always choose the right ones.

Eventually the horizontal and vertical lines form an elongated grid which we can modify into a regular grid. It suffices to contract the grid such that the size of columns equals the size of rows, that is $\epsilon=\delta$. The contraction does not violate planarity, because the whole grid is regularly contracted, thus no positive distance can lower to zero. The coordinates of points are chosen such that every line segment is primitive, thus the embedding is planar and primitive.
\end{proof}

This result gives an affirmative answer to the conjecture asked by Pe\H{n}aloza and Martinez~\cite{pen09}. The authors point out that proof of this statement would yield an alternate proof of the Four Color Theorem. However we use it as one of the assumptions. In fact, this theorem is equivalent to the Four Color Theorem, as the proof of the reverse implication is apparent. Also if we use the Five Color Theorem in the proof instead, then we obtain three-locatable planar grid drawings of planar graphs and thus the problem of finding almost-proper grid drawings (i.e.\ at most three grid points on each line segment) belongs to $\P$. 

Note that the choice of coordinates also gives us a coloring of $G$ with at most four colors. In fact, we also proved a stronger conjecture from~\cite{pen09}.
\begin{cor}
\label{properCor}
Any planar graph $G$ is isomorphic to a plane subgraph $H$ of the visibility graph of the integer lattice, in such a way that the function $g\left( a_{1},a_{2} \right)=\left( a_{1} \left( \bmod 2 \right), a_{2} \left( \bmod 2 \right) \right)$ is a coloring of $H$ that uses exactly $\chi\left( G \right)$ colors.
\end{cor}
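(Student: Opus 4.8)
The plan is to read the corollary straight off the explicit construction carried out in the proof of Theorem~\ref{proper}, with essentially no new work. Recall that in that proof we fixed a four-coloring $f\colon V \to C$ with $C=\{(0,0),(0,1),(1,0),(1,1)\}$ and then placed each vertex $v$ on a vertical line whose number $l$ satisfies $l \equiv f(v)_1 \pmod 2$ and on a horizontal line whose number $l'$ satisfies $l' \equiv f(v)_2 \pmod 2$. The decisive observation is that every coordinate $f(v)_i$ lies in $\{0,1\}$, so these congruences say precisely that the first and second coordinates of $\phi(v)$ reduce modulo two to $f(v)_1$ and $f(v)_2$. Hence $g(\phi(v)) = (\phi(v)_1 \bmod 2, \phi(v)_2 \bmod 2) = f(v)$ for every $v \in V$; that is, $g \circ \phi = f$. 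Everything else follows from this identity.

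First I would record that the drawing produced by Theorem~\ref{proper} is proper, hence $H=\phi(G)$ is a crossing-free plane image in which every edge is a primitive segment. Since a primitive segment between two lattice points is exactly an edge of the visibility graph of $\mathbb{Z}^2$, the set $H$ is a plane subgraph of that visibility graph and $\phi$ is an isomorphism of $G$ onto $H$. This already supplies the embedding part of the statement.

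Next I would transfer the coloring along this isomorphism. Because $f$ is a proper coloring of $G$ and $g\circ\phi=f$, the map $g$ is a proper coloring of $H$: any two adjacent vertices of $H$ are the images of adjacent vertices of $G$, which receive distinct $f$-colors and therefore distinct $g$-colors. Moreover the set of colors that $g$ realizes on $H$ equals the set of colors that $f$ realizes on $G$.

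Finally, to pin the count to exactly $\chi(G)$, I would run the whole construction starting from an \emph{optimal} coloring, i.e.\ one using exactly $\chi(G)$ of the four pairs in $C$; this is possible since $\chi(G)\le 4$ by the Four Color Theorem, and the construction is indifferent to how many of the four pairs actually occur. Then $g$ uses exactly those $\chi(G)$ colors, and it cannot use fewer because $g$ is a proper coloring of $H\cong G$. I do not expect a genuine obstacle: the only point requiring care is the verification that the placement congruences force $\phi(v)_i \equiv f(v)_i \pmod 2$, and this is immediate from $f(v)_i\in\{0,1\}$, so the corollary drops out of the construction once this identity is observed.
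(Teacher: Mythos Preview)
Your proposal is correct and matches the paper's own treatment: the paper states the corollary without a separate proof, remarking only that ``the choice of coordinates also gives us a coloring of $G$ with at most four colors,'' so the result is read off the construction in Theorem~\ref{proper} exactly as you do. Your extra observation that one should begin from an optimal $\chi(G)$-coloring (rather than an arbitrary four-coloring) is the one detail needed to get the word ``exactly,'' and you handle it correctly.
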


The grid drawings obtained by the proof can require large area with no reasonable bounds. However if we start with a nicer intial drawing, then we can estimate the upper bounds quite easily. 

Suppose that the initial embedding is already a grid drawing of size $O \left( n \right) \times O \left( n \right)$ where $n$ denotes the number of vertices of a given graph. The results of Chrobak, De Fraysseix, Pach, and Pollack, and Nakano~\cite{ chrobak95, fray90, schnyder90} ensure the existence of such embedding. Then the following lemma gives us a lower bound on $r$.

\begin{lem}
\label{distance}
Given an $n \times n$ integer grid, $n > 1$, the minimum nonzero distance from any grid point to any line segment is in $\Omega \left( 1 \over n \right)$. 
\end{lem}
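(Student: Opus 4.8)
The plan is to reduce the desired distance bound to the classical fact that a lattice triangle with non-collinear vertices has area at least $1/2$. First I would fix an arbitrary grid point $P$ together with an arbitrary segment $\overline{AB}$ joining two distinct grid points $A,B$ with $P \notin \overline{AB}$, and observe that it suffices to bound $\mathrm{dist}(P,\overline{AB})$ from below by $\Omega(1/n)$ uniformly over all such choices. The key elementary observation is that $\mathrm{dist}(P,\overline{AB}) \ge \mathrm{dist}(P,\ell)$, where $\ell$ is the supporting line through $A$ and $B$, since the segment is a subset of $\ell$. Hence I would work with the easier distance to the line whenever $P$ is off $\ell$, the case where the perpendicular foot escapes the segment only making the true distance larger.

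The main computation uses the area of the triangle $PAB$. Writing this area as one half of the absolute value of the determinant of the integer vectors $\vec{AB}$ and $\vec{AP}$, it equals one half of a nonzero integer whenever $P$ is not collinear with $A$ and $B$, and is therefore at least $1/2$. On the other hand the same area equals $\tfrac12\,|AB|\cdot\mathrm{dist}(P,\ell)$, so
\[
\mathrm{dist}(P,\overline{AB}) \ge \mathrm{dist}(P,\ell) = \frac{2\,\mathrm{Area}(PAB)}{|AB|} \ge \frac{1}{|AB|}.
\]
Since all points lie in an $n \times n$ grid, the coordinate differences are at most $n-1$, so $|AB| \le \sqrt{2}\,(n-1) = O(n)$, which already yields the desired $\Omega(1/n)$ bound in the non-collinear case.

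It remains to treat the degenerate case in which $P$ lies on $\ell$ but not on $\overline{AB}$, so that $\mathrm{dist}(P,\ell)=0$ while $\mathrm{dist}(P,\overline{AB})>0$ and the area argument collapses. Here the nearest point of $\overline{AB}$ to $P$ is one of the endpoints, and since $P$ is a grid point distinct from both $A$ and $B$, this distance is at least the minimum separation of two distinct grid points, namely $1$. Combining the two cases gives $\mathrm{dist}(P,\overline{AB}) \ge \min\{1,\,1/(\sqrt{2}\,(n-1))\} = \Omega(1/n)$, as claimed.

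I expect the main obstacle to be bookkeeping rather than conceptual: one must carefully separate the distance to the segment from the distance to its supporting line, and isolate the collinear subcase, since precisely there the half-integrality-of-area argument degenerates and must be replaced by the crude integer-separation bound. Once those cases are cleanly split, the half-integrality of lattice-triangle area does all the work, and the only quantitative input is the trivial diameter bound $|AB|=O(n)$.
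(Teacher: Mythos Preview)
Your proof is correct and follows essentially the same approach as the paper: both compute the distance from a grid point to the supporting line as $|\det(\vec{AB},\vec{AP})|/|AB|$, observe that the numerator is a nonzero integer (you phrase this as half-integrality of lattice-triangle area, the paper via the point--line distance formula), and then bound $|AB|$ by the grid diameter $O(n)$. You are in fact slightly more careful than the paper, which silently omits the collinear degenerate case that you treat explicitly.
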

\begin{proof}
Let us recall that the distance from a point $\left( a_{1},a_{2} \right)$ to a line with equation $kx + ly + m = 0$ is given by the formula
\[ {\left| ka_{1} + la_{2} + m \right|} \over{\sqrt{k^2 + l^2}}.\]
Without loss of generality let us assume that the first point is $\left( a_{1},a_{2}\right)$ and the line intersects grid points $\left( 0,0 \right)$ and $\left( b_{1}, b_{2} \right)$ where $b_{1}$ and $b_{2}$ are relatively prime (otherwise we consider the point $\left( \frac {b_{1}}{\gcd \left( b_{1},b_{2} \right)}, \frac {b_{2}}{\gcd \left( b_{1},b_{2} \right)}  \right)$ that lies on the same line). Then the equation of our line is $b_{1}x - b_{2}y = 0$ and $\left| b_{2}a_{1} - b_{1}a_{2} \right|$ is at least one. Therefore the minimum nonzero distance is at least
\[ \frac{1}{\sqrt{b_{1}^2 + b_{2}^2}}.\]
Now we minimalize the expression by choosing coordinates $b_{1}$ and $b_{2}$. The sum $b_{1}^2 + b_{2}^2$ is maximal when $b_{1}$ and $b_{2}$ differ as little as possible, so the appropriate choice is $b_{1} = n$ and $b_{2} = n-1$. So the minimum possible distance from grid point to a line is at least
\[ \frac{1}{\sqrt{2n^2 - 2n + 1}} \in \Omega \left( {1} \over {n} \right).\]
\end{proof}

Thus if the size of the initial grid drawing is $cn \times cn$, where $c > 0$ is some constant, then the minimum nonzero distance $r$ from any point representing a vertex to any point representing an edge is in $\Omega \left( 1 \over n \right)$. In the first part of the proof we refine the coordinates such that the neighborhood of every vertex is intersected by a constant number of vertical lines. The diameter of the neighborhoods is exactly $r \in \Omega \left( 1 \over n \right)$, therefore the width of the new grid drawing is in $O \left( n^2 \right)$.

All that is left is to estimate the height of the drawing. Following the proof we refine the vertical coordinates such that every neighborhood is intersected by at least $\prod_{p \in P}{p}$ horizontal lines. The diameter of the neighborhoods  is now in $O \left( 1 \right)$, so if we find a function $f \colon \mathbb{N} \to \mathbb{N}$ such that the product $\prod_{p \in P}{p}$ is in $O\left( f \left( n \right) \right)$, then we know that the height is in $O\left( n^2 f \left( n \right) \right)$ too.

We can focus on every vertex separately. Let $v$ be a vertex of $G$ and let $P_{v}$ denote the set of prime numbers which divide the nonzero horizontal distance between the points $\phi \left(u \right)$ and $\phi \left(v \right)$ where $uv \in E$. Then the product of primes which divide the distance between $u$ and $v$ is in $O \left( n^2 \right)$ as it is the width of the whole drawing. Therefore we get that \[ \prod_{p \in P_{v}}{p} \in O \left( n^{2 d \left( u \right) } \right)  \] where $d \left( u \right)$ denotes the degree of $u$. According to the Chinese Remainder Theorem, we see that we can consider only the vertex with maximum degree $\Delta$.

Hence we can find a proper grid drawing of any planar graph $G$ with given coloring in the grid of size $O\left( n^2\right) \times O\left( n^{2 \Delta + 2}\right)$ where $n$ denotes the number of vertices of $G$. Thus the rough estimation of the size of the drawing is polynomial for $\Delta \in O \left( 1 \right)$, quasi-polynomial for $\Delta \in O \left( \poly \left( \log n \right) \right)$ and exponential for linear maximum degree.

Unfortunately we don't know how to embed the general planar graphs in a grid of polynomial size and the following question remains open.

\begin{conj}
\label{properConjecture}
For arbitrary planar graph $G$, is there a proper grid drawing of $G$ in a grid of polynomial size?
\end{conj}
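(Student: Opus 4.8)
The plan is to attack the affirmative direction: to construct, for every planar graph $G$ on $n$ vertices, a proper grid drawing inside a grid of size $\poly(n)\times\poly(n)$. I would keep the overall architecture of the proof of Theorem~\ref{proper} — start from a straight-line planar drawing in an $O(n)\times O(n)$ grid (which exists by~\cite{fray90,schnyder90,chrobak95}), scale by $O(n)$ so that Lemma~\ref{distance} furnishes each vertex with a perturbation neighborhood of radius $\Omega(1)$, and then relocate each vertex to an integer point of its neighborhood. The width then stays $O(n^2)$, and the only quantity that can force a superpolynomial grid is the height, which in the proof of Theorem~\ref{proper} is driven by the product $\prod_{p\in P_v}p$ of the primes dividing the horizontal differences at a vertex $v$. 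The entire difficulty is therefore concentrated in one number-theoretic statement: after the horizontal coordinates are fixed, can one choose integer vertical coordinates, inside a polynomial range, so that $\gcd(|\phi(u)_1-\phi(v)_1|,|\phi(u)_2-\phi(v)_2|)=1$ holds for every edge $uv$ simultaneously?

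The first step I would take is to exploit the horizontal freedom that the proof of Theorem~\ref{proper} essentially discards. After scaling, each neighborhood is crossed by a whole interval of vertical lines, so instead of fixing $\phi(v)_1$ only modulo $2$ and $3$, I would choose the horizontal coordinates so that every horizontal difference $|\phi(u)_1-\phi(v)_1|$ across an edge is smooth, i.e.\ so that the set $P$ of primes that can ever appear has polynomially bounded product. If one can force all horizontal differences to share a single small set of prime factors — for instance by restricting the admissible $x$-coordinates to a set whose relevant pairwise differences are smooth — then the per-vertex product $\prod_{p\in P_v}p$ collapses to $\poly(n)$, and the height bound $O(n^2\prod_{p\in P_v}p)$ becomes polynomial immediately.

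Failing a smoothness guarantee, the second line of attack is to replace the greedy per-vertex residue argument by a global one that does not pay separately for each incident edge. Here I would model the vertical assignment as a \textsc{CRT}/list-coloring problem: for each prime $p\in P$ assign the four color classes four distinct residues modulo $p$ (possible since $p>4$), so that adjacency already forces $\phi(u)_2\not\equiv\phi(v)_2\pmod p$ for every edge at once; the cost is then $\prod_{p\in P}p$ over the \emph{global} prime set rather than $n^{\Theta(\Delta)}$, and the task reduces to bounding $\prod_{p\in P}p$. Since $G$ is planar it has $O(n)$ edges and each horizontal difference is $O(n^2)$, so $|P|=O(n\log n)$, each prime of size $O(n^2)$ — and this is exactly where the current bound is lost. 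The main obstacle, and the crux of the conjecture, is thus purely arithmetic: one must show that the $O(n)$ horizontal differences arising from a planar drawing can be arranged to have jointly polynomial rather than exponential prime content, or alternatively that a polynomial vertical range always contains, for each vertex, a value avoiding the finitely many residues forbidden by its neighbors. I expect that controlling the prime factorizations of the horizontal differences — a question about which integer point sets admit planar perturbations with smooth pairwise differences — will be the hard part, and that a lower-bound construction exhibiting a high-degree planar graph that genuinely needs superpolynomial area is the most plausible way the conjecture could instead fail.
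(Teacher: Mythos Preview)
The statement you are addressing is Conjecture~\ref{properConjecture}, which the paper explicitly leaves open: immediately before stating it the author writes ``Unfortunately we don't know how to embed the general planar graphs in a grid of polynomial size and the following question remains open.'' There is therefore no proof in the paper to compare against, and what you have written is not a proof either --- it is a research outline that correctly isolates the bottleneck (the product $\prod_{p\in P_v}p$ governing the height in the proof of Theorem~\ref{proper}) but does not resolve it.

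Both of your proposed attacks have real gaps. For the smoothness approach, after scaling by $O(n)$ each neighborhood contains only $O(1)$ admissible integer $x$-coordinates, so you have essentially no freedom to force the $O(n)$ edge-differences to be jointly smooth; scaling by a larger polynomial factor gives more room but you give no argument that a smooth perturbation exists, and smooth numbers are sparse enough that this is far from automatic. For the ``global CRT'' approach, your accounting is off: assigning each color class a residue modulo every $p\in P$ does not replace the per-vertex cost $n^{\Theta(\Delta)}$ by something smaller --- it replaces it by $\prod_{p\in P}p$, and since $|P|$ can be of order $n\log n$ with primes up to $O(n^2)$, this product is in general exponential, not polynomial. (Indeed the paper's per-vertex bound $\prod_{p\in P_v}p$ is already no worse than the global product, since $P_v\subseteq P$.) Your parenthetical ``possible since $p>4$'' is also false for $p\in\{2,3\}$, which is exactly why the paper handles those primes via the two coordinates of the color rather than by a single residue assignment. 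In short, you have accurately diagnosed where the difficulty lies --- the arithmetic of the horizontal differences --- but neither line of attack, as written, gets past it; the conjecture remains open.
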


\section{Conclusion}

We studied grid drawings from three points of views. First, we showed a connection between the chromatic number of the graph $G$ and the maximal number of grid points that must appear on a line segment of a grid drawing of $G$. This led to a new classification of graphs according to so called locatability. 

Second, we showed that it is $\NP$-complete to find the minimal number of columns on which a graph can be drawn. If we consider only primitive grid drawings, then we have to move to higher dimensions as the chromatic number grows. We also characterized the graphs which can be located on $l$ columns in $d$-dimensional grid and showed that locating  graphs is also $\NP$-complete. Natural question is what happens if we consider grid drawings with both width and height bounded~\cite{wood04}. Such problem is closely connected to "No-three-in-line problem"~\cite{guy68}.

In the last section we proved that there exist primitive planar grid drawings of an arbitrary planar graph. However the proof of this statement uses a strong result, namely the Four Color Theorem. Perhaps the most intriguing question left open is whether there is a proof of this statement without using the Four Color Theorem. Such proof would yield an alternate proof of this classical result in the graph theory.

\section*{Acknowledgments}

{\small
I would like to thank my supervisor Pavel Valtr for his time and for all the provided advice.}

%---------------------------- Bibliography -------------------------------

% Please add the contents of the .bbl file 

\small 
\bibliographystyle{abbrv}

%\begin{thebibliography}{99}

\bibliography{citace}

\begin{thebibliography}{10}

\bibitem{apos76}
T.~M. Apostol.
\newblock {\em Introduction to analytic number theory}.
\newblock Number sv.~1 in Undergraduate texts in mathematics. Springer-Verlag,
  1976.

\bibitem{chrobak95}
M.~Chrobak and S.~Nakano.
\newblock Minimum-width grid drawings of plane graphs.
\newblock {\em Graph Drawing (Proc. GD '94), volume 894 of Lecture Notes in
  Computer Science}, 10:104--110, 1995.

\bibitem{fray90}
H.~de~Fraysseix, J.~Pach, and R.~Pollack.
\newblock How to draw a planar graph on a grid.
\newblock {\em Combinatorica}, 10(1):41--51, 1990.

\bibitem{chap06}
{G. G. Chappell}, {J. Gimbel}, and {C. Hartman}.
\newblock Thresholds for path colorings of planar graphs.
\newblock {\em Topics in Discrete Mathematics}, 1966.

\bibitem{gar90}
M.~R. Garey and D.~S. Johnson.
\newblock {\em Computers and Intractability; A Guide to the Theory of
  NP-Completeness}.
\newblock W. H. Freeman \& Co., New York, NY, USA, 1990.

\bibitem{god91}
W.~Goddard.
\newblock Acyclic colorings of planar graphs.
\newblock {\em Discrete Math}, 91:91--94, 1991.

\bibitem{guy68}
R.~Guy and P.~Kelly.
\newblock {\em The no-three-in-line problem}.
\newblock Research paper. University of Calgary, Dept. of Mathematics, 1968.

\bibitem{cac11}
{J. C\'{a}ceres}, {C. Cort\'{e}s}, {C. I. Grima, M. Hachimori}, {A.
  M\'{a}rquez}, {R. Mukae}, {A. Nakamoto}, {S. Negami}, {R. Robles}, and {J.
  Valenzuela}.
\newblock Compact grid representation of graphs.
\newblock In {\em XIV Spanish Meeting on Computational Geometry}, pages
  121--124, 2011.

\bibitem{le03}
H.-O. Le, V.~B. Le, and H.~M\"{u}ller.
\newblock Splitting a graph into disjoint induced paths or cycles.
\newblock {\em Discrete Appl. Math.}, 131:199--212, September 2003.

\bibitem{lov66}
L.~Lov\'{a}sz.
\newblock On decomposition of graphs.
\newblock {\em SIAM J Algebraic and Discrete Methods}, 3(1):237--238, 1966.

\bibitem{pen09}
D.~F. Pe\H{n}aloza and F.~J.~Z. Martinez.
\newblock Every four-colorable graph is isomorphic to a subgraph of the
  visibility graph of the integer lattice.
\newblock In {\em Proceedings of the 21st Canadian Conference on Computational
  Geometry (CCCG2009)}, pages 91--94, 2009.

\bibitem{schnyder90}
W.~Schnyder.
\newblock Embedding planar graphs on the grid.
\newblock In {\em Proceedings of the first annual ACM-SIAM symposium on
  Discrete algorithms}, SODA '90, pages 138--148, Philadelphia, PA, USA, 1990.
  Society for Industrial and Applied Mathematics.

\bibitem{wood04}
D.~R. Wood.
\newblock Grid drawings of $k$-colourable graphs.
\newblock {\em Computational Geometry}, 30(1):25--28, 2005.

\end{thebibliography}

%\end{thebibliography}

\end{document}